\newtheorem{theorem}{Theorem}[section]
\newtheorem{lemma}[theorem]{Lemma}
\newtheorem{corollary}[theorem]{Corollary}
\theoremstyle{definition}
\newtheorem{example}[theorem]{Example}
\theoremstyle{remark}
\newtheorem{remark}[theorem]{Remark}
\numberwithin{equation}{section}
\newcommand{\be}{\begin{equation}}
\newcommand{\ee}{\end{equation}}
\newcommand{\NN}{\mathbb{N}}
\begin{document}
\setcounter{page}{1}

\title[Inequalities on the generalized and the joint spectral radius of Hadamard products]{Inequalities on the generalized and the joint spectral radius of bounded sets of positive operators on Banach function and sequence spaces}

\author[K. Bogdanovi\'c]{Katarina Bogdanovi\'c$^*$}

\address{$^*$ University of Belgrade, Faculty of Mathematics, Studentski trg 16, P. O. box 550, 11 000 Belgrade, Serbia}
\email{\textcolor[rgb]{0.00,0.00,0.84}{katarinabgd77@gmail.com}}





\subjclass[2010]{Primary 47B65; Secondary 15A42, 15A60.}

\keywords{Hadamard-Schur product; Spectral radius; Non-negative matrices; Positive operators; Sequence spaces.}

\date{Received: xxxxxx; Revised: yyyyyy; Accepted: zzzzzz.
\newline \indent $^{*}$ Corresponding author}

\begin{abstract}
In this article we prove new inequalities for the generalized and the joint spectral radius of bounded sets of positive operators on Banach function and sequence spaces, in particular some inequalities for positive kernel operators on $L^2(X, \mu)$ that generalize, extend and refine several of results obtained recently.
\end{abstract}\maketitle

\section{Introduction and Preliminaries}
\vspace{1mm}

Let $(X, \mathfrak M, \mu)$ be a non -empty space $X$ with a $\sigma$-finite positive measure $\mu$ on a $\sigma$-algebra $\mathfrak M$, i. e. $X$ is the union of at most countably numbers of sets of finite measure.We denote by $M(\mu)$ the collection of all measurable complex valued functions such that functions being equal almost everywhere will be identified. 
 A Banach space $L\subseteq M(\mu)$ is called a {\it Banach function space} on $(X, \mathfrak M, \mu)$ if for any $f\in L$ and $g\in M(\mu)$ the following implication holds $|g|\le|f|\Rightarrow g\in L$ and $\|g\| \le \|f\|$. We assume that $X$ is the carrier of $L$. Carrier is defined to be the complement in $X$ of a maximal set $Y\in\mathfrak M$ such that $f \chi_{Y}=0$ a.e., for all $f\in L,$ where $\chi_Y$ stands for the characteristic function of the set $Y\in\mathfrak M$ (see \cite{Za83} for more details). In the special case of $(X, \mathfrak M, \mu)$, where $X=\NN$, $\mathfrak M$ is a $\sigma$-algebra of all subsets of $\NN$ and $\mu$ is a counting measure on $\NN$, the Banach function space will be called {\it Banach sequence space}. Clearly the possibility that $X=\{1, \ldots , N\}$ for some finite $N \in \NN$ is also allowed.   

In the sequel we will refer to a family $\mathcal{L}$ as the collection of all Banach sequence spaces
$L$ satisfying the condition that $e_n = \chi_{\{n\}} \in L$ and
$\|e_n\|_L=1$ for all $n \in \NN$.

Examples of Banach function spaces are the classical Lebesgue spaces $L^p(\mu)$  for $1\le p \le\infty$, Orlicz spaces (\cite{KM99}), Lorentz and Marcinkiewicz spaces (\cite{BS88}, \cite{KPS82}) while examples of Banach sequence spaces are well known $l^p$ spaces for $1\le p \le\infty$.

The norm on a Banach function space $L$ is called {\it order continuous norm} if $\|f_n\|_L\to 0$ as $n \to\infty$ for every positive, decreasing sequence $\{f_n\}_{n\in\mathbb{N}}\subset M(\mu)$ such that $\inf\{f_n \in M(\mu): n\in \NN\}=0$, which is denoted by $f_n \downarrow 0$. For $1\le p<\infty $ $L^p(\mu)$ spaces are classical examples of spaces with an order continuous norm. 
Moreover, every reflexive Banach function space has an order continuous norm. We will be particularly interested in Banach function spaces $L$ such that $L$ and its dual space $L^*$ have order continuous norm. Specially well known among them are spaces $L^p(\mu)$, $1< p<\infty $. An example of a such non-reflexive Banach function space is $L=c_0\in\mathcal{L}$, where $L^*=(c_0)^*=l^1\in\mathcal{L}$.

By an {\it operator} on a Banach function space $L$ we always mean a linear operator on $L$. An operator $A$ on $L$ is said to be {\it positive} if it maps nonnegative functions to nonnegative ones, that is, $AL_+\subset L_+$, where $L_+$ denotes the positive cone $L_+=\{f\in L : f\ge 0 \; \mathrm{a. e}\}$. Given operators $A$ and $B$ on $L$, we write $A\ge B$ if the operator $A-B$ is positive. Recall that a positive operator on $L$ is always bounded , i. e., its operator norm 
\be
\|A\|=\sup\{\|Ax\|_L : x\in L, \|x\|_L \le 1\}=\sup\{\|Ax\|_L : x\in L_+, \|x\|_L \le 1\}
\label{einfuehrung}
\ee
is finite (the second equality in \eqref{einfuehrung} follows from $|Af|\le A|f|$ for $f\in L$). (see e. g. \cite[Theorem 1.7]{P09}). 

An operator $A$ on a Banach function space $L$ is called a {\it kernel operator} if there exists $\mu\times\mu$-measurable function $a(x, y)$ on $X\times X$ such that, for all $f\in L$ and for almost all $x \in X$
$$\int_X |a(x, y)f(y)|\, d\mu(y)< \infty \ \ \ {\rm and} \ \
  (Af)(x)=\int_X a(x, y)f(y)\, d\mu(y).$$
Recall that a kernel operator $A$ is positive iff its kernel $a$ is non-negative function almost everywhere.

Let $L$ be a Banach function space such that $L$ and $L^*$ have order continuous norms and let $A$ and $B$ be positive kernel operators on $L$. Then the Hausdorff measure of non-compactness of $A$, denoted by $\gamma (A)$, is defined as follows 
$$\gamma (A)=\inf\left\{\delta >0 : \;\; \mathrm{there}\;\; \mathrm{is}\;\; \mathrm{a}\;\; \mathrm{finite}\;\; M \subset L \;\; \mathrm{such} \;\; \mathrm{that}\;\; A(D_L)\subset M+ \delta D_L \right\},$$
where $D_L=\{f \in L : \|f\|_L \le 1\}$. The basic properties of the Hausdorff measure of non-compactness are $\gamma (A)\le \|A\|$, $\gamma (A+B)\le \gamma (A)+\gamma (B)$, $\gamma (AB)
\le \gamma (A)\gamma (B)$, and $\gamma (\alpha A)=\alpha \gamma (A)$ for $\alpha \ge 0$. Moreover, $0\le A \le B$ implies $\gamma (A)\le \gamma (B)$ (see e.g. \cite[Corollary 4.3.7 and Corollary 3.7.3]{M91}).

 The essential spectral radius of $A$, denoted by $\rho_{ess} (A)$ is defined as follows
 \be
 \nonumber
 \rho_{ess} (A)=max\{|\lambda|: \lambda\in\sigma_{ess} (A)\}
 \ee
 where $\sigma_{ess} (A)$ is the essential spectral radius of $A$ (see \cite{M91} for more details). Then
\be
\rho_{ess} (A)=\lim_{j \to \infty} \gamma (A^j)^{1/j}=\inf_{j \in \NN} \gamma (A^j)^{1/j}
\label{uvod}
\ee
 Recall that if $L=L^2(X, \mu)$ 
  $\gamma (A^*)=\gamma (A)$ and $\rho_{ess} (A^*)=\rho_{ess} (A)$, where $A^*$ denotes the adjoint of $A$. Note that the equalities (\ref{uvod}) and
  $\rho_{ess} (A^*)=\rho_{ess} (A)$  are valid for any bounded operator $A$ on a given complex Banach space $L$ (see e.g. \cite[Theorem 4.3.13 and Proposition 4.3.11]{M91}).


Let $A$ and $B$ be positive kernel operators on a Banach function space $L$ with kernels $a$ and $b$ respectively, and $\alpha \ge 0$.
The \textit{Hadamard or (Schur) product} $A \circ B$ of $A$ and $B$ is the kernel operator with kernel equal to $a(x, y)b(x, y)$ at point $(x, y)\in X\times X$, i. e. $A \circ B$ 
 is the positive kernel operator given by
\be
\nonumber
(A \circ B)f(x)=\int_X a(x, y)b(x, y)f(y)\, d\mu(y),
\ee
 which can be defined, in general, only on some order ideal of $L$. Similarly, the \textit{Hadamard or (Schur) power} $A^{(\alpha)}$ of $A$ is the kernel operator with kernel equal to $(a(x, y))^{\alpha}$ at point $(x, y) \in X \times X$, 
 which can be defined only on some order ideal of $L$.

Let $A_1 , \ldots, A_m$ be positive kernel operators on a Banach function space $L$, and $\alpha_1 , \ldots, \alpha_m$ positive numbers such that $\sum_ {j=1}^m \alpha_j=1$. Then the {\it Hadamard weighted geometric mean} $A=A_1^{(\alpha_1)}\circ A_2^{(\alpha_2)}\circ\cdots\circ A_m^{(\alpha_m)}$ of the operators $A_1 , \ldots, A_m$ is a positive kernel operator defined on the whole space $L$, since $A \le \alpha_1 A_1+ \alpha_2 A_2+ \ldots + \alpha_m A_m$ by the inequality between weighted arithmetic and geometric means.
A matrix $A=[a_{ij}]_{i,j\in R}$ is called {\it non-negative} if $a_{ij}\ge 0$ for all $i, j \in R$. 
For matrices $A$ and $B$, we write $A \ge B$ if the matrix $A - B$ is non-negative.

By an {\it operator} on a Banach sequence space $L$ we always
mean a linear operator on $L$. We say that a non-negative matrix $A$ defines an operator on $L$ if $Ax \in L$ for all $x\in L$, where
$(Ax)_i = \sum _{j \in \NN}a_{ij}x_j$. Then $Ax \in L_+$ for all $x\in L_+$ and
so $A$ defines a {\it positive} operator on $L$. 
Clearly, (finite or infinite) non-negative matrices, that define operators on a Banach sequence spaces are a special case of positive kernel operators.

The following result was proved in \cite[Theorem 2.2]{DP05} and \cite[Theorem 5.1 and Example 3.7]{P06} (see also e.g. \cite[Theorem 2.1]{P17}).

\begin{theorem}
\label{thanfang}
Let $\{A_{i j}\}_{i=1, j=1}^{k, m}$ be positive kernel operators on a Banach function space $L$ and let $\alpha_1$, $\alpha_2$,...,$\alpha_m$ be positive numbers.

\noindent (i) If \; $\sum_{j=1}^{m} \alpha_j=1$, then the positive kernel operator 
\be
A:=\left(A_{1 1}^{(\alpha_1)}\circ\cdots\circ A_{1 m}^{(\alpha_m)} \right)\cdots\left(A_{k 1}^{(\alpha_1)}\circ\cdots\circ A_{k m}^{(\alpha_m)} \right)
\label{ground}
\ee
satisfies the following inequalities
\be
A \le (A_{1 1}\cdots A_{k 1})^{(\alpha_1)}\circ\cdots\circ(A_{1 m}\cdots A_{k m})^{(\alpha_m)} , \\
\label{grund2}
\ee
\begin{align}
\nonumber
\left\|A \right\| \le \left\|(A_{1 1}\cdots A_{k 1})^{(\alpha_1)}\circ\cdots\circ(A_{1 m}\cdots A_{k m})^{(\alpha_m)} \right\| \\
\le \left\|A_{1 1}\cdots A_{k 1}\right\|^{\alpha_1}\cdots \left\|A_{1 m}\cdots A_{k m}\right\|^{\alpha_m}\;\;\;\;\;\;\;\;\;\;\;\;
\label{grund3}
\end{align}
\begin{align}
\nonumber
\rho\left(A\right)\le \rho\left((A_{1 1}\cdots A_{k 1})^{(\alpha_1)}\circ\cdots\circ(A_{1 m}\cdots A_{k m})^{(\alpha_m)}\right) \\
\le \rho\left(A_{1 1}\cdots A_{k 1}\right)^{\alpha_1}\cdots \rho\left(A_{1 m}\cdots A_{k m}\right)^{\alpha_m}\;\;\;\;\;\;\;\;\;\;
\label{grund4}
\end{align}
If, in addition $L$ and $L^*$ have order continuous norms, then
\begin{align}
\nonumber
\gamma\left(A\right)\le \gamma\left((A_{1 1}\cdots A_{k 1})^{(\alpha_1)}\circ\cdots\circ(A_{1 m}\cdots A_{k m})^{(\alpha_m)}\right) \\
\nonumber
\le \gamma\left(A_{1 1}\cdots A_{k 1}\right)^{\alpha_1}\cdots \gamma\left(A_{1 m}\cdots A_{k m}\right)^{\alpha_m}\;\;\;\;\;\;\;\;\;\;
\\
\nonumber
\rho_{ess}\left(A\right)\le \rho_{ess}\left((A_{1 1}\cdots A_{k 1})^{(\alpha_1)}\circ\cdots\circ(A_{1 m}\cdots A_{k m})^{(\alpha_m)}\right) \\
\nonumber
\le \rho_{ess}\left(A_{1 1}\cdots A_{k 1}\right)^{\alpha_1}\cdots \rho_{ess}\left(A_{1 m}\cdots A_{k m}\right)^{\alpha_m}\;\;\;\;\;\;
\label{grund6}
\end{align}
\noindent(ii) If $L\in \mathcal L$, $\sum_{j=1}^{m} \alpha_j\ge 1$ and $\{A_{i j}\}_{i=1, j=1}^{k, m}$ are nonnegative matrices that define operators on $L$, then $A$ from \eqref{ground} defines a positive operator on $L$ and the inequalities \eqref{grund2}, \eqref{grund3} and \eqref{grund4} hold.
\end{theorem}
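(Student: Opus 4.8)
The plan is to reduce the whole statement to the single-factor version of the Hadamard weighted geometric mean inequality, namely the case $k=1$, and then to build the $k$-fold product case by a repeated application combined with submultiplicativity of the various functionals involved. First I would establish (or recall, since this is the content of \cite{DP05,P06}) the building-block inequality: for positive kernel operators $B_1,\dots,B_m$ on $L$ and weights $\alpha_j>0$ with $\sum\alpha_j=1$, the Hadamard weighted geometric mean satisfies the pointwise-on-functions estimate that lets one pull it past a composition, i.e. $\bigl(B_1^{(\alpha_1)}\circ\cdots\circ B_m^{(\alpha_m)}\bigr)\bigl(C_1^{(\alpha_1)}\circ\cdots\circ C_m^{(\alpha_m)}\bigr)\le (B_1C_1)^{(\alpha_1)}\circ\cdots\circ(B_mC_m)^{(\alpha_m)}$, which follows from H\"older's inequality applied to the kernel integrals. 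Iterating this $k-1$ times over the rows of the array $\{A_{ij}\}$ gives \eqref{grund2} directly.

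Next, for \eqref{grund3} I would use that $0\le S\le T$ implies $\|S\|\le\|T\|$ on a Banach function space, so the first inequality there is immediate from \eqref{grund2}; the second inequality $\bigl\|(A_{11}\cdots A_{k1})^{(\alpha_1)}\circ\cdots\circ(A_{1m}\cdots A_{km})^{(\alpha_m)}\bigr\|\le\prod_j\|A_{1j}\cdots A_{kj}\|^{\alpha_j}$ is the norm estimate for a single Hadamard weighted geometric mean of the $m$ operators $D_j:=A_{1j}\cdots A_{kj}$, which again reduces to H\"older after writing $\|\cdot\|$ via \eqref{einfuehrung} as a supremum over the positive cone and testing against a product $f=f_1^{\alpha_1}\cdots f_m^{\alpha_m}$. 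For \eqref{grund4}, the first inequality is monotonicity of $\rho$ under $0\le S\le T$ (valid for positive operators on a Banach function space), and the second is obtained by applying the already-proved norm bound \eqref{grund3} to the $n$-th powers: using $A^n\le$ (the corresponding geometric mean with each $D_j$ replaced by $D_j^n$, via \eqref{grund2} applied to the enlarged array), taking $n$-th roots and letting $n\to\infty$ through Gelfand's formula $\rho(S)=\lim_n\|S^n\|^{1/n}$. The essential-spectral-radius and $\gamma$ statements run identically: $\gamma$ is monotone, submultiplicative and positively homogeneous by the properties quoted after the definition, so the same two-step argument (monotonicity for the first inequality, then a $\gamma$-version of \eqref{grund3} followed by \eqref{uvod}, i.e. $\rho_{ess}(S)=\lim_n\gamma(S^n)^{1/n}$, for the second) goes through once one checks the $\gamma$-analogue of the single geometric-mean norm bound.

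For part (ii), the point is that the hypothesis $\sum_j\alpha_j\ge1$ (rather than $=1$) still produces an operator on $L\in\mathcal L$: writing $\beta_j=\alpha_j/\sum_i\alpha_i$ so that $\sum\beta_j=1$, one has $A_{1j}^{(\alpha_j)}=\bigl(A_{1j}^{(\beta_j)}\bigr)^{(\sum_i\alpha_i)}$ and, since $\sum_i\alpha_i\ge1$, the extra Hadamard power only shrinks entries that the membership $e_n\in L$, $\|e_n\|_L=1$ controls; more precisely one compares with the $\sum\alpha_j=1$ case and uses that for $L\in\mathcal L$ Hadamard powers with exponent $\ge1$ map operators on $L$ to operators on $L$ with no increase in the relevant norm. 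Once $A$ is known to define a positive operator, inequalities \eqref{grund2}--\eqref{grund4} follow exactly as in part (i), since all the steps there were pointwise kernel inequalities plus H\"older, none of which needed $\sum\alpha_j=1$ beyond ensuring the operators are defined.

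The main obstacle I anticipate is the single-factor Hadamard weighted geometric mean norm inequality in the generality of an arbitrary Banach function space $L$ (the second inequality of \eqref{grund3} with $k=1$, and its $\gamma$-analogue): carrying out the H\"older argument requires choosing near-optimal test functions $f_j$ in the positive cone and controlling $\|f_1^{(\alpha_1)}\circ\cdots\|_L$ by $\prod\|f_j\|_L^{\alpha_j}$, which uses the Calder\'on–Lozanovskii-type interpolation property of Banach function spaces; making this rigorous — rather than just citing \cite{DP05,P06} — is where the real work lies, while everything else is bookkeeping with monotonicity, Gelfand's formula, and \eqref{uvod}.
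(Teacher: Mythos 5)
A preliminary remark: the paper does not prove this theorem at all — it is quoted as known, with the proof attributed to \cite[Theorem 2.2]{DP05} and \cite[Theorem 5.1 and Example 3.7]{P06} — so there is no in-paper argument to match. Your outline for part (i) is essentially the standard route of those references: the kernel-level H\"older inequality giving $\bigl(B_1^{(\alpha_1)}\circ\cdots\circ B_m^{(\alpha_m)}\bigr)\bigl(C_1^{(\alpha_1)}\circ\cdots\circ C_m^{(\alpha_m)}\bigr)\le (B_1C_1)^{(\alpha_1)}\circ\cdots\circ(B_mC_m)^{(\alpha_m)}$, iteration over rows for \eqref{grund2}, monotonicity of the norm and of $\rho$ on the positive cone, the $k=1$ norm bound (which needs only the elementary estimate $\|g_1^{\alpha_1}\cdots g_m^{\alpha_m}\|_L\le\prod_j\|g_j\|_L^{\alpha_j}$, obtainable by the weighted AM--GM trick rather than any heavy Calder\'on--Lozanovskii machinery), and Gelfand's formula applied to powers. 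That part is sound.

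There are, however, two genuine gaps. First, your treatment of the $\gamma$ and $\rho_{ess}$ statements is not "identical" to the norm case: $\gamma$ is defined through coverings of the unit ball, not as a supremum over positive test functions, so the pointwise estimate $(D_1^{(\alpha_1)}\circ\cdots\circ D_m^{(\alpha_m)})f\le\prod_j(D_jf)^{\alpha_j}$ does not by itself yield $\gamma(D_1^{(\alpha_1)}\circ\cdots\circ D_m^{(\alpha_m)})\le\prod_j\gamma(D_j)^{\alpha_j}$; establishing this $k=1$ inequality is precisely the substantive content of \cite[Theorem 5.1]{P06}, and it is where the order continuity of $L$ and $L^*$ is actually used — you only promise to "check" it. Second, part (ii) is argued incorrectly: your closing claim that the steps of part (i) "did not need $\sum_j\alpha_j=1$ beyond ensuring the operators are defined" is false, since H\"older with weights summing to more than $1$ fails for a general measure. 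The reason (ii) is restricted to $L\in\mathcal L$ and matrices is that there one has the entrywise bound $a_{ij}=(Ae_j)_i\le\|A\|$ (using $\|e_j\|_L=1$ and that coordinate functionals are contractive on a Banach sequence space), which lets one write, with $\alpha=\sum_j\alpha_j\ge1$ and $\beta_j=\alpha_j/\alpha$, the estimate $A^{(\alpha_j)}\le\|A\|^{\alpha_j-\beta_j}A^{(\beta_j)}$ entrywise and thereby reduce every inequality to the normalized case $\sum_j\beta_j=1$ (equivalently, for counting measure one may renormalize inside the sum since each term is dominated by the whole sum). Your "the extra Hadamard power only shrinks entries that $e_n\in L$, $\|e_n\|_L=1$ controls" gestures at this but is not a proof, and without the entrywise domination argument both the definedness of $A$ on $L$ and the inequalities \eqref{grund2}--\eqref{grund4} for $\sum_j\alpha_j>1$ remain unjustified.
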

The following result is a special case of Theorem \ref{thanfang}.
\begin{theorem}
Let $A_1, \ldots, A_m$ be positive kernel operators on a Banach function space $L$, and $\alpha_1, \ldots , \alpha_m$ positive numbers.

\noindent (i) If \; $\sum_{j=1}^{m} \alpha_j=1$ then
\be
\|A_1^{(\alpha_1)}\circ A_2^{(\alpha_2)}\circ\cdots\circ A_m^{(\alpha_m)}\| \le
\|A_1\|^{\alpha_1}\|A_2\|^{\alpha_2}\cdots \|A_m\|^{\alpha_m}
\label{ungl1}
\ee
and 
\be
\rho(A_1^{(\alpha_1)}\circ A_2^{(\alpha_2)}\circ\cdots\circ A_m^{(\alpha_m)}) \le
\rho(A_1)^{\alpha_1} \rho(A_2)^{\alpha_2}\cdots \rho(A_m)^{\alpha_m}
\label{ungl2}
\ee
If, in addition $L$ and $L^*$ have order continuous norms, then
\be
\nonumber
\gamma(A_1^{(\alpha_1)}\circ A_2^{(\alpha_2)}\circ\cdots\circ A_m^{(\alpha_m)}) \le
\gamma(A_1)^{\alpha_1} \gamma(A_2)^{\alpha_2}\cdots \gamma(A_m)^{\alpha_m}
\label{ungl3}
\ee
and
\be
\nonumber
\rho_{ess}(A_1^{(\alpha_1)}\circ A_2^{(\alpha_2)}\circ\cdots\circ A_m^{(\alpha_m)}) \le
\rho_{ess}(A_1)^{\alpha_1} \rho_{ess}(A_2)^{\alpha_2}\cdots \rho_{ess}(A_m)^{\alpha_m}
\label{ungl4}
\ee

\noindent (ii) If $L\in\mathcal L$, $\sum_{j=1}^{m} \alpha_j\ge 1$ and if $A_1, \ldots , A_m$ are nonnegative matrices that define positive operators on $L$, then $A_1^{(\alpha_1)}\circ\cdots\circ A_m^{(\alpha_m)}$ defines positive operator on $L$ and \eqref{ungl1} and \eqref{ungl2} hold.

\noindent (iii) If $L\in\mathcal L$, $t\ge 1$ and if $A, A_1, \ldots , A_m$ are nonnegative matrices that define operators on $L$, then $A^{(t)}$ defines an operator on $L$ and the following inequalities hold
\be
\nonumber
A_1^{(t)}\cdots A_m^{(t)}\le (A_1\cdots A_m)^{(t)}
\label{ungl5}
\ee
\be
\nonumber
\rho(A_1^{(t)}\cdots A_m^{(t)})\le \rho(A_1\cdots A_m)^{t}
\label{ungl6}
\ee
\be
\nonumber
\|A_1^{(t)}\cdots A_m^{(t)}\|\le \|A_1\cdots A_m\|^{t}
\label{ungl7}
\ee
\end{theorem}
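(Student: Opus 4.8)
The plan is to derive every assertion as a specialization of Theorem~\ref{thanfang}, choosing the number of rows $k$ and of Hadamard factors $m$ appropriately and then reading off which of the chained inequalities there collapse to equalities.

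For parts (i) and (ii) I would take $k=1$ in Theorem~\ref{thanfang} and set $A_{1j}=A_j$ for $j=1,\dots,m$. With $k=1$ the operator $A$ of \eqref{ground} is exactly $A_1^{(\alpha_1)}\circ\cdots\circ A_m^{(\alpha_m)}$, and each product $A_{1j}\cdots A_{kj}$ is just $A_j$, so the first (Hadamard-mean) term in each of \eqref{grund2}, \eqref{grund3}, \eqref{grund4} and in the displayed $\gamma$- and $\rho_{ess}$-chains coincides with $A$ itself; the outer inequalities then become, respectively, \eqref{ungl1}, \eqref{ungl2} and the stated $\gamma$- and $\rho_{ess}$-inequalities. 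The hypotheses match: $\sum_{j=1}^m\alpha_j=1$ together with order continuity of $L$ and $L^*$ is exactly what Theorem~\ref{thanfang}(i) asks, which handles (i); while $L\in\mathcal L$, $\sum_{j=1}^m\alpha_j\ge1$ and the matrices defining operators are exactly the hypotheses of Theorem~\ref{thanfang}(ii), which in addition yields that $A_1^{(\alpha_1)}\circ\cdots\circ A_m^{(\alpha_m)}$ defines a (positive) operator on $L$, giving (ii).

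For part (iii) I would instead use a single Hadamard factor: apply Theorem~\ref{thanfang}(ii) with $m=1$, with $k$ equal to the present $m$, putting $A_{i1}=A_i$ and $\alpha_1=t$, so that $\sum_{j=1}^{1}\alpha_j=t\ge1$ is the required hypothesis. Then the operator in \eqref{ground} is $A_1^{(t)}\cdots A_m^{(t)}$, which by Theorem~\ref{thanfang}(ii) defines a positive operator on $L$, and \eqref{grund2}, \eqref{grund3}, \eqref{grund4} specialize precisely to $A_1^{(t)}\cdots A_m^{(t)}\le(A_1\cdots A_m)^{(t)}$, $\|A_1^{(t)}\cdots A_m^{(t)}\|\le\|A_1\cdots A_m\|^{t}$ and $\rho(A_1^{(t)}\cdots A_m^{(t)})\le\rho(A_1\cdots A_m)^{t}$. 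The remaining claim, that $A^{(t)}$ defines an operator on $L$ whenever $A$ does, is the case $k=m=1$ of the same statement.

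Since everything is a pure specialization I do not anticipate a genuine obstacle; the only care needed is bookkeeping --- correctly matching the two indices of Theorem~\ref{thanfang} (rows versus Hadamard factors) to each of the three parts, verifying in each case that the hypotheses on $\sum\alpha_j$ and on $L$ are exactly those inherited from Theorem~\ref{thanfang}, and observing that ``positive operator'' there is stronger than the asserted ``defines an operator'', so nothing is lost.
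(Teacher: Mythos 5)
Your proposal is correct and matches the paper's approach: the paper simply states this result as a special case of Theorem~\ref{thanfang}, and your specializations (taking $k=1$ with $A_{1j}=A_j$ for parts (i) and (ii), and a single Hadamard factor $m=1$ with $\alpha_1=t\ge 1$ and $A_{i1}=A_i$ for part (iii), including $k=m=1$ for the claim about $A^{(t)}$) are exactly the intended reductions, with the hypotheses matching in each case.
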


\bigskip

Let $\Sigma$ denote a bounded set of bounded operators on a complex Banach space $L$. For $m \ge 1$, $\Sigma^m$ is the set of all products of matrices in $\Sigma$ of length m, i. e.
$$\Sigma^m=\{A_1A_2\cdots A_m : A_i \in \Sigma, i=1, \ldots, m\}.$$
The generalized spectral radius of $\Sigma$ is defined by
\be
\nonumber
\rho(\Sigma)=\limsup_{m \to \infty} \;[\sup_{A\in \Sigma^m} \rho(A)]^{1/m}
\label{rhogen}
\ee
which is equal to 
$$\rho(\Sigma)=\sup_{m \in \NN} \;[\sup_{A\in \Sigma^m} \rho(A)]^{1/m}.$$
The joint spectral radius of $\Sigma$ is defined by 
\be
\nonumber
\hat{\rho} (\Sigma)=\lim_{m \to \infty}[\sup_{A\in \Sigma^m} \|A\|]^{1/m}.
\label{joint}
\ee
The generalized essential spectral radius of $\Sigma$ is defined by
\be
\nonumber
\rho_{ess}(\Sigma)=\limsup_{m \to \infty} \;[\sup_{A\in \Sigma^m} \rho_{ess}(A)]^{1/m}
\label{essgen}
\ee
and is equal to
$$\rho_{ess} (\Sigma)=\sup_{m \in \NN} \;[\sup_{A\in \Sigma^m} \rho_{ess}(A)]^{1/m}.$$
The joint essential spectral radius of $\Sigma$ is defined by
\be
\nonumber
\hat{\rho}_{ess} (\Sigma)=\lim_{m \to \infty}[\sup_{A\in \Sigma^m} \gamma(A)]^{1/m}.
\label{jointess}
\ee
It is well known that $\rho(\Sigma)=\hat{\rho} (\Sigma)$ for a precompact nonempty set $\Sigma$ of compact operators on $L$ (see e.g. \cite{ShT00}, \cite{ShT08}, \cite{M12}), in particular for a bounded set of complex $n \times n$ matrices (see e.g. \cite{BW92}, \cite{MP13}, \cite{E95}, \cite{D11}). This equality is called the Berger-Wang formula or also the generalized spectral radius theorem (for an elegant proof in the finite dimensional case see \cite{D11}). It is known that also the generalized Berger-Wang formula holds, i.e. , that for any precompact nonempty set $\Sigma$ of bounded operators on $L$ we have
$$\hat{\rho} (\Sigma)=\max\{\rho(\Sigma), \hat{\rho}_{ess} (\Sigma)\}$$
(see e.g. \cite{ShT00}, \cite{ShT08}, \cite{M12}).

We will use the following well known facts that hold for all $r\in \{\rho, \hat{\rho}, \rho_{ess}, \hat{\rho}_{ess}\}$:
\be
\nonumber
r(\Sigma^m)=r(\Sigma)^m \;\;\mathrm{and}\;\;
r(\Psi\Sigma)=r(\Sigma\Psi)
\label{erneut}
\ee
where $\Psi\Sigma=\{AB: A\in \Psi, B\in \Sigma\}$ and $m\in \NN$.

Let $\Psi_1, \ldots , \Psi_m$ be bounded sets of positive kernel operators on a Banach function space $L$ and let $\alpha_1, \ldots , \alpha_m$ be positive numbers such that $\sum_{i=1}^{m} \alpha_i=1$. The bounded set of positive kernel operators on $L$, defined by 
$$\Psi_1^{(\alpha_1)}\circ\cdots\circ\Psi_m^{(\alpha_m)}=\{A_1^{(\alpha_1)}\circ\cdots\circ A_m^{(\alpha_m)}: A_1\in \Psi_1, \ldots , A_m\in \Psi_m\},$$
is called the {\it weighted Hadamard (Schur) geometric mean} of sets $\Psi_1, \ldots , \Psi_m$. The set $\Psi_1^{(\frac{1}{m})}\circ\cdots\circ\Psi_m^{(\frac{1}{m})}$ is called the {\it Hadamard (Schur) geometric mean} of sets $\Psi_1, \ldots , \Psi_m$.
The following result, that we will need in the sequel, was proved in \cite[Theorem 3.3]{BP22}.
\begin{theorem}
\label{endlich}
Let $\{\Psi_{i j}\}_{i=1, j=1}^{k, m}$ be bounded sets of positive kernel operators on a Banach function space $L$ and let $\alpha_1, \ldots , \alpha_m$ be positive numbers. \\

\noindent (i) If $r\in\{\rho, \hat{\rho}\}$, $\sum_{i=1}^{m} \alpha_i=1$ and $n\in \NN$, then
\begin{align}
\nonumber
r\left(\left(\Psi_{1 1}^{(\alpha_1)}\circ\cdots\circ\Psi_{1 m}^{(\alpha_m)}\right)\cdots\left(\Psi_{k 1}^{(\alpha_1)}\circ\cdots\circ\Psi_{k m}^{(\alpha_m)}\right)\right)\;\;\;\;\;\;\;\;\;\;\;\;\;\;\;\;\;\;\;\\
\nonumber
\le r\left(\left(\Psi_{1 1}\cdots\Psi_{k 1}\right)^{(\alpha_1)}\circ\cdots\circ\left(\Psi_{1 m}\cdots\Psi_{k m}\right)^{(\alpha_m)}\right)\;\;\;\;\;\;\;\;\;\;\\
\nonumber
\le r\left(((\Psi_{1 1}\cdots\Psi_{k 1})^n)^{(\alpha_1)}\circ\cdots\circ((\Psi_{1 m}\cdots\Psi_{k m})^n)^{(\alpha_m)}\right)^{\frac{1}{n}}\\
\le r(\Psi_{1 1}\cdots\Psi_{k 1})^{\alpha_1}\cdots r(\Psi_{1 m}\cdots\Psi_{k m})^{\alpha_m}\;\;\;\;\;\;\;\;\;\;\;\;\;\;\;\;\;\;\;\;\;\;\;
\label{nice}
\end{align}
If, in addition $L$ and $L^*$ have order continuous norms, the Inequalities \eqref{nice} hold also for each $r\in\{\rho_{ess}, \hat{\rho} _{ess}\}$.

\noindent (ii) If $L\in\mathcal L$, $r\in\{\rho, \hat{\rho}\}$, $\sum_{j=1}^{m} \alpha_j\ge 1$ and $\{\Psi_{i j}\}_{i=1, j=1}^{k, m}$ are bounded sets of nonnegative matrices that define operators on $L$, then Inequalities \eqref{nice} hold.

In particular, if $\Psi_1, \ldots , \Psi_k$ are bounded sets of nonnegative matrices that define positive operators on $L$ and $t\ge 1$, then
\be
\nonumber
r(\Psi_1^{(t)}\cdots\Psi_k^{(t)})\le r((\Psi_1\cdots\Psi_k)^{(t)})\le r(((\Psi_1\cdots\Psi_k)^n)^{(t)})^{\frac{1}{n}}\le r(\Psi_1\cdots\Psi_k)^{t}
\label{mit_t}
\ee
\end{theorem}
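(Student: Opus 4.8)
The plan is to derive the three displayed inequalities from the single--operator estimates of Theorem~\ref{thanfang} together with careful bookkeeping of the $\sup$ versus $\lim$ definitions of the four radii, using only two elementary order facts about positive kernel operators (in case (ii), about nonnegative matrices that define operators on $L$): (a) $0\le A\le B$ implies $\rho(A)\le\rho(B)$, $\|A\|\le\|B\|$, $\gamma(A)\le\gamma(B)$, and hence $\rho_{ess}(A)\le\rho_{ess}(B)$ (the last because $0\le A^j\le B^j$ and by \eqref{uvod}); (b) a product of such inequalities is again one, i.e. $0\le A_p\le B_p$ for $p=1,\dots,N$ implies $0\le A_1\cdots A_N\le B_1\cdots B_N$ (telescope, using that left-- and right--multiplication by a positive operator preserves the order; at the level of kernels this is immediate). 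Throughout I abbreviate $\Phi_i=\Psi_{i1}^{(\alpha_1)}\circ\cdots\circ\Psi_{im}^{(\alpha_m)}$, $\Gamma_j=\Psi_{1j}\cdots\Psi_{kj}$, $\Theta=\Gamma_1^{(\alpha_1)}\circ\cdots\circ\Gamma_m^{(\alpha_m)}$, and $\Theta_n=(\Gamma_1^n)^{(\alpha_1)}\circ\cdots\circ(\Gamma_m^n)^{(\alpha_m)}$; these are bounded sets of bounded positive operators on $L$ (in case (i) because $\sum_j\alpha_j=1$, in case (ii) by Theorem~\ref{thanfang}(ii)), and I write $r_0$ for the functional $\rho$, $\|\cdot\|$, $\rho_{ess}$, or $\gamma$ underlying $r$.

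For the first inequality $r(\Phi_1\cdots\Phi_k)\le r(\Theta)$ I would fix $N$ and an arbitrary $A\in(\Phi_1\cdots\Phi_k)^N$, so $A=B_1\cdots B_N$ where each $B_p\in\Phi_1\cdots\Phi_k$ is itself a product of $k$ Hadamard means built from operators $F_{p,i,j}\in\Psi_{ij}$. Applying the kernel inequality \eqref{grund2} to each $B_p$ separately bounds it above by $C_p:=(F_{p,1,1}\cdots F_{p,k,1})^{(\alpha_1)}\circ\cdots\circ(F_{p,1,m}\cdots F_{p,k,m})^{(\alpha_m)}$, and since $F_{p,1,j}\cdots F_{p,k,j}\in\Gamma_j$ we have $C_p\in\Theta$. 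By (b), $A\le C_1\cdots C_N\in\Theta^N$, hence $\sup_{(\Phi_1\cdots\Phi_k)^N}r_0\le\sup_{\Theta^N}r_0$ by (a). Taking $N$-th roots and then passing to $\sup_N$ (for $r\in\{\rho,\rho_{ess}\}$) or to $\lim_N$ (for $r\in\{\hat\rho,\hat\rho_{ess}\}$) gives $r(\Phi_1\cdots\Phi_k)\le r(\Theta)$, since $(\sup_{\Theta^N}r_0)^{1/N}$ is bounded by (resp.\ converges to) $r(\Theta)$. This is the step I expect to be the main obstacle: the natural temptation is to dominate $A$ by a single operator of the larger set $\Theta_n$, but what the chain requires at this stage is a genuine length-$N$ product from $\Theta^N$, and that is available only because of the multiplicativity of the order recorded in (b).

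For the second inequality $r(\Theta)\le r(\Theta_n)^{1/n}$ I would note that a typical element of $\Theta^n$ is a product of $n$ factors, each a Hadamard mean $H_1^{(\alpha_1)}\circ\cdots\circ H_m^{(\alpha_m)}$ with $H_j\in\Gamma_j$; by \eqref{grund2} it is dominated by $D_1^{(\alpha_1)}\circ\cdots\circ D_m^{(\alpha_m)}$ with $D_j\in\Gamma_j^n$, i.e. by an element of $\Theta_n$. By (b), every element of $(\Theta^n)^M$ is dominated by an element of $(\Theta_n)^M$, so $\sup_{(\Theta^n)^M}r_0\le\sup_{(\Theta_n)^M}r_0$ and therefore $r(\Theta)^n=r(\Theta^n)\le r(\Theta_n)$. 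For the third inequality $r(\Theta_n)^{1/n}\le r(\Gamma_1)^{\alpha_1}\cdots r(\Gamma_m)^{\alpha_m}$ I would take an element $C\in\Theta_n^M$, a product of $M$ Hadamard means built from operators $G_j\in\Gamma_j^n$, dominate $C$ via \eqref{grund2} by $E_1^{(\alpha_1)}\circ\cdots\circ E_m^{(\alpha_m)}$ with each $E_j\in\Gamma_j^{nM}$, and then apply the single--operator estimates \eqref{grund3}, \eqref{grund4} (and their $\gamma$ and $\rho_{ess}$ analogues in Theorem~\ref{thanfang}) with $k=1$ to get $r_0(C)\le\prod_{j=1}^m(\sup_{\Gamma_j^{nM}}r_0)^{\alpha_j}$. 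For $r\in\{\rho,\rho_{ess}\}$ one has $\sup_{\Gamma_j^{nM}}r_0\le r(\Gamma_j)^{nM}$ directly from the $\sup$-definition, so $\sup$ over $C$ and $M$ yields $r(\Theta_n)\le(\prod_j r(\Gamma_j)^{\alpha_j})^n$; for $r\in\{\hat\rho,\hat\rho_{ess}\}$ one instead writes $(\sup_{\Theta_n^M}r_0)^{1/M}\le\prod_j((\sup_{\Gamma_j^{nM}}r_0)^{1/(nM)})^{n\alpha_j}$ and lets $M\to\infty$, the inner factors converging to $r(\Gamma_j)$, to reach the same bound. Taking $n$-th roots links the three estimates into \eqref{nice}.

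Finally, part (ii) is this very argument with each use of Theorem~\ref{thanfang}(i) replaced by Theorem~\ref{thanfang}(ii); since the latter supplies no $\gamma$-inequality, only $r\in\{\rho,\hat\rho\}$ survive, and one should check first that $\Phi_i$, $\Gamma_j^n$, $\Theta$, $\Theta_n$ still define operators on $L$ --- which is exactly what Theorem~\ref{thanfang}(ii) provides even when individual exponents $\alpha_j$ are smaller than $1$. The displayed ``in particular'' inequalities are the specialization $m=1$, $\alpha_1=t\ge1$, $\Psi_{i1}=\Psi_i$, for which $\Phi_i=\Psi_i^{(t)}$, $\Gamma_1=\Psi_1\cdots\Psi_k$, $\Theta=(\Psi_1\cdots\Psi_k)^{(t)}$, and $\Theta_n=((\Psi_1\cdots\Psi_k)^n)^{(t)}$.
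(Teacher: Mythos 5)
The paper does not prove Theorem~\ref{endlich} itself but quotes it from \cite[Theorem 3.3]{BP22}, and your reconstruction is correct and follows essentially the same standard route used there: dominate each element of the relevant product set by an element of the corresponding Hadamard-mean set via the pointwise inequality \eqref{grund2} of Theorem~\ref{thanfang}, use monotonicity of $\rho$, $\|\cdot\|$, $\gamma$ and $\rho_{ess}$ on positive operators together with the single-operator bounds \eqref{grund3}--\eqref{grund4} (and their $\gamma$, $\rho_{ess}$ analogues), and then do the sup/limit bookkeeping, including the subsequence argument along $N=nM$ for $\hat{\rho}$ and $\hat{\rho}_{ess}$. Your handling of case (ii) via Theorem~\ref{thanfang}(ii) (which also guarantees that the Hadamard expressions define operators on $L$ when $\sum_j\alpha_j\ge 1$) and the specialization $m=1$, $\alpha_1=t$ for \eqref{mit_t} are likewise correct.
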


The rest of the article is organized in the following way. In Section 2 we extend some results from \cite{P19} by proving generalizations of some inequalities in \cite{P19} to the inequalities for bounded sets of positive kernel operators on Banach function spaces. In Section 3 we give some basic definitions and results concerning the norm of bounded sets of positive kernel operators on $L^2(X, \mu)$, in particular. We prove some results that generalize the results from \cite{P19} and \cite{BP21}.
\section{Results for positive kernel operators on Banach function spaces}
In this section we generalize results from \cite{P19}.

Let $\alpha_1, \alpha_2, \ldots ,\alpha_m$ be positive numbers such that $\sum_{j=1}^{m}\alpha_j=1$ and let $\Psi_1, \ldots ,\Psi_m$ be bounded sets of positive kernel operators on a Banach function space $L$. We define bounded sets of positive kernel operators $\Sigma_1 \ldots , \Sigma_m$ on $L$ in the following way:
\begin{align}\textstyle
\nonumber
\Sigma_1=\Psi_1^{(\alpha_1)}\circ\Psi_2^{(\alpha_2)}\circ\cdots\circ\Psi_m^{(\alpha_m)}\\
\nonumber
\Sigma_2=\Psi_2^{(\alpha_1)}\circ\Psi_3^{(\alpha_2)}\circ\cdots\circ\Psi_1^{(\alpha_m)}\\
\nonumber
\cdots\;\;\;\;\;\;\;\;\;\;\;\;\;\;\;\;\;\;\;\;\;\;\;\;\;\;\;\;\;\;\;\;\;\;\;\;\;\;\;\;\;\;\\
\nonumber
\Sigma_m=\Psi_m^{(\alpha_1)}\circ\Psi_1^{(\alpha_2)}\circ\cdots\circ\Psi_{m-1}^{(\alpha_m)}
\end{align}
i. e. , 
\begin{align}
\Sigma_i=\Psi_i^{(\alpha_1)}\circ\Psi_{i+1}^{(\alpha_2)}\circ\cdots\circ\Psi_m^{(\alpha_{m-i+1})}\circ\Psi_1^{(m-i+2)}\circ\cdots\circ\Psi_{i-1}^{(\alpha_m)}
\label{new2}
\end{align}
for $i=1, \ldots , m$.

The following result that follows from Theorem \ref{endlich}(i) and above definition is an extension of \cite[Theorem 3.1]{P19}. 
\begin{corollary}
\label{kathyth}
Let $\Psi_1, \ldots ,\Psi_m$ be bounded sets of positive kernel operators on a Banach function space $L$ and let $\alpha_1, \ldots ,\alpha_m$ be positive numbers such that $\sum_{j=1}^{m}{\alpha_j}=1$ and let $r\in\{\rho, \hat{\rho}\}$. If $\Sigma_1, \ldots , \Sigma_m$ are bounded sets of positive kernel operators on $L$ defined as in (\ref{new2}) then
\begin{align}\textstyle
r(\Sigma_1  \Sigma_2 \cdots \Sigma_m)\le r(\Psi_1 \Psi_2 \cdots \Psi_m)
\label{new3}
\end{align}
If, in addition $L$ and $L^*$ have order continuous norms, then (\ref{new3}) holds also for each $r\in\{\rho_{ess}, \hat{\rho}_{ess}\}$.
\end{corollary}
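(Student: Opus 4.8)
The plan is to derive the statement directly from Theorem~\ref{endlich}(i) by choosing its double-indexed array of sets to be a suitable cyclic arrangement of $\Psi_1,\dots,\Psi_m$. Concretely, first I would put $k=m$ and, for $i,j\in\{1,\dots,m\}$, set $\Psi_{ij}:=\Psi_{\ell(i,j)}$, where $\ell(i,j)\in\{1,\dots,m\}$ is the unique index with $\ell(i,j)\equiv i+j-1\pmod m$. Inspecting the definition \eqref{new2}, one sees that with this choice the $i$-th weighted Hadamard geometric mean $\Psi_{i1}^{(\alpha_1)}\circ\cdots\circ\Psi_{im}^{(\alpha_m)}$ is exactly $\Sigma_i$, so that $\Sigma_1\Sigma_2\cdots\Sigma_m=(\Psi_{11}^{(\alpha_1)}\circ\cdots\circ\Psi_{1m}^{(\alpha_m)})\cdots(\Psi_{m1}^{(\alpha_1)}\circ\cdots\circ\Psi_{mm}^{(\alpha_m)})$.

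Next I would invoke the first and last members of the chain \eqref{nice} in Theorem~\ref{endlich}(i), which together give $r(\Sigma_1\Sigma_2\cdots\Sigma_m)\le r(\Psi_{11}\cdots\Psi_{m1})^{\alpha_1}\cdots r(\Psi_{1m}\cdots\Psi_{mm})^{\alpha_m}$. The decisive observation is that for each fixed column $j$ the product $\Psi_{1j}\Psi_{2j}\cdots\Psi_{mj}$ equals $\Psi_j\Psi_{j+1}\cdots\Psi_m\Psi_1\cdots\Psi_{j-1}$, which is a cyclic rotation of $\Psi_1\Psi_2\cdots\Psi_m$. Writing $\Psi=\Psi_1\cdots\Psi_{j-1}$ and $\Sigma=\Psi_j\cdots\Psi_m$, the identity $r(\Psi\Sigma)=r(\Sigma\Psi)$ recalled before Theorem~\ref{endlich} yields $r(\Psi_{1j}\cdots\Psi_{mj})=r(\Psi_1\Psi_2\cdots\Psi_m)$ for every $j$. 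Substituting this into the bound and using $\sum_{j=1}^m\alpha_j=1$ gives $r(\Sigma_1\Sigma_2\cdots\Sigma_m)\le r(\Psi_1\cdots\Psi_m)^{\sum_{j=1}^m\alpha_j}=r(\Psi_1\Psi_2\cdots\Psi_m)$, which is \eqref{new3}.

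For $r\in\{\rho_{ess},\hat{\rho}_{ess}\}$, under the additional hypothesis that $L$ and $L^*$ have order continuous norms, the same argument applies word for word: Theorem~\ref{endlich}(i) explicitly guarantees \eqref{nice} for these $r$ as well, and the cyclicity identity $r(\Psi\Sigma)=r(\Sigma\Psi)$ holds for all four radii. I do not anticipate a genuine obstacle here; the only points requiring care are the bookkeeping in the cyclic index $\ell(i,j)$ — so that the columns of the array $(\Psi_{ij})$ are precisely the cyclic rotations of the product $\Psi_1\cdots\Psi_m$ — and making explicit that the two-set identity $r(\Psi\Sigma)=r(\Sigma\Psi)$ is all that is needed to shift the starting point of a finite product of bounded sets.
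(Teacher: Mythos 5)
Your proposal is correct and coincides with the paper's own argument: both apply the first and last inequalities of \eqref{nice} in Theorem~\ref{endlich}(i) to the cyclic arrangement of $\Psi_1,\dots,\Psi_m$ whose rows give exactly $\Sigma_1,\dots,\Sigma_m$, identify the column products as cyclic rotations of $\Psi_1\cdots\Psi_m$ with equal radius via $r(\Psi\Sigma)=r(\Sigma\Psi)$, and conclude using $\sum_{j=1}^m\alpha_j=1$; the essential-radius case follows identically under the order-continuity hypothesis, just as in the paper.
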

\begin{proof}
Applying 
Theorem \ref{endlich}(i) we have
\begin{equation}
\nonumber
r(\Sigma_1 \cdots \Sigma_m)\le r(\Psi_1\Psi_2\cdots\Psi_m)^{\alpha_1}r(\Psi_2\cdots\Psi_m\Psi_1)^{\alpha_2}\cdots r(\Psi_m\Psi_1\cdots\Psi_{m-1})^{\alpha_m}\\
\end{equation}
\begin{equation}
\nonumber
=r(\Psi_1\Psi_2\cdots\Psi_m)\;\;\;\;\;\;\;\;\;\;\;\;\;\;\;\;\;\;\;\;\;\;\;\;\;\;\;\;\;\;\;\;\;\;\;\;\;\;\;\;\;\;\;\;\;\;\;\;
\end{equation}
since $\sum_{j=1}^{m}{\alpha_j}=1$ which completes the proof.
\end{proof}
In fact, applying 
Theorem \ref{endlich}(i) in the proof above we can obtain the following refinement of (\ref{new3}).
\begin{corollary}
\label{Akb}
Let $L$, $r$,$\Psi_j, \Sigma_j$ and $\alpha_j$, for $j=1, \ldots , m$ be as in Theorem \ref{kathyth} and let $\Phi_j=\Psi_j\cdots\Psi_m\Psi_1\cdots\Psi_{j-1}$ for $j=1, \ldots , m$. Then 
\be
\nonumber
r(\Sigma_1  \cdots \Sigma_m)\le r(\Phi_1^{(\alpha_1)}
\circ\cdots\circ\Phi_m^{(\alpha_m)})\le r((\Phi_1^n)^{(\alpha_1)}
\circ\cdots\circ(\Phi_m^n)^{(\alpha_m)})^{1/n}\\
\le r(\Psi_1\cdots\Psi_m)
\label{jedn}
\ee
\end{corollary}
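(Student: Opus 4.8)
The plan is to derive Corollary~\ref{Akb} directly from Theorem~\ref{endlich}(i), exactly as the proof of Corollary~\ref{kathyth} does, but retaining the two intermediate bounds in the chain~\eqref{nice} rather than collapsing immediately to the final product. First I would observe that, with the cyclic definition~\eqref{new2} of the sets $\Sigma_i$, the product $\Sigma_1\cdots\Sigma_m$ is precisely of the form to which Theorem~\ref{endlich}(i) applies with $k=m$: the $i$-th factor $\Sigma_i = \Psi_i^{(\alpha_1)}\circ\Psi_{i+1}^{(\alpha_2)}\circ\cdots\circ\Psi_{i-1}^{(\alpha_m)}$ is the weighted Hadamard geometric mean, with weights $\alpha_1,\ldots,\alpha_m$, of the cyclic rotation $(\Psi_i,\Psi_{i+1},\ldots,\Psi_{i-1})$ of the list. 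Hence in the notation of Theorem~\ref{endlich}(i) one takes $\Psi_{ij} = \Psi_{i+j-1}$ (indices read cyclically modulo $m$), so that for each fixed $j$ the ``column product'' $\Psi_{1j}\cdots\Psi_{kj}$ becomes $\Psi_j\Psi_{j+1}\cdots\Psi_{j-1} = \Phi_j$.

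With this identification, the chain~\eqref{nice} of Theorem~\ref{endlich}(i) reads, for $r\in\{\rho,\hat\rho\}$ and any $n\in\NN$,
\begin{align}
\nonumber
r(\Sigma_1\cdots\Sigma_m) &\le r\!\left(\Phi_1^{(\alpha_1)}\circ\cdots\circ\Phi_m^{(\alpha_m)}\right)\\
\nonumber
&\le r\!\left((\Phi_1^n)^{(\alpha_1)}\circ\cdots\circ(\Phi_m^n)^{(\alpha_m)}\right)^{1/n}\\
\nonumber
&\le r(\Phi_1)^{\alpha_1}\cdots r(\Phi_m)^{\alpha_m},
\end{align}
which is exactly the first three inequalities of~\eqref{jedn}. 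For the final inequality I would invoke the standard facts recalled in the excerpt, namely $r(\Psi\Sigma)=r(\Sigma\Psi)$: each $\Phi_j$ is a cyclic rotation of the product $\Psi_1\cdots\Psi_m$, so $r(\Phi_j)=r(\Psi_1\cdots\Psi_m)$ for every $j$, and therefore $r(\Phi_1)^{\alpha_1}\cdots r(\Phi_m)^{\alpha_m} = r(\Psi_1\cdots\Psi_m)^{\alpha_1+\cdots+\alpha_m} = r(\Psi_1\cdots\Psi_m)$ since $\sum_{j=1}^m\alpha_j=1$. This gives~\eqref{jedn}. Finally, if $L$ and $L^*$ have order continuous norms, Theorem~\ref{endlich}(i) supplies the same chain for $r\in\{\rho_{ess},\hat\rho_{ess}\}$, and the cyclic-invariance argument is identical, so the conclusion extends to those two radii as well.

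The only point requiring a little care — and the place I would expect a referee to look hardest — is the bookkeeping of the cyclic index shift: one must check that applying Theorem~\ref{endlich}(i) with the doubly-indexed family $\Psi_{ij}=\Psi_{i+j-1}$ really reproduces the operators $\Sigma_i$ from~\eqref{new2} as its row factors and the operators $\Phi_j$ as its column products, and in particular that the weight $\alpha_j$ attached to the $j$-th Hadamard factor in $\Sigma_i$ matches the weight $\alpha_j$ attached to $\Phi_j$ in the bound. Once the indices are aligned this is purely formal, but it is the substantive content of the argument; everything else is a direct quotation of Theorem~\ref{endlich}(i) together with the cyclic invariance $r(\Psi\Sigma)=r(\Sigma\Psi)$ and the normalization $\sum\alpha_j=1$.
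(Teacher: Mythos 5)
Your proposal is correct and is essentially the paper's own argument: the paper obtains Corollary \ref{Akb} by precisely this application of Theorem \ref{endlich}(i) to the cyclic family $\Psi_{ij}=\Psi_{i+j-1}$ used in the proof of Corollary \ref{kathyth}, only now keeping the two intermediate terms of \eqref{nice}, and then using $r(\Phi_j)=r(\Psi_1\cdots\Psi_m)$ together with $\sum_{j=1}^m\alpha_j=1$. Your index bookkeeping (rows giving the $\Sigma_i$, columns giving the $\Phi_j$ with weight $\alpha_j$) checks out, as does the extension to $\rho_{ess},\hat\rho_{ess}$ under order continuity.
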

Using 
Theorem \ref{endlich}(ii) instead of 
Theorem \ref{endlich}(i) we obtain the following result which is an extension of the \cite[Theorem 3.6]{P19}.
\begin{corollary}
\label{Bkb}
Let $L\in \mathcal L$, and let $\Psi_1, \ldots , \Psi_m$ be bounded sets of nonnegative matrices that define operators on $L$ and let $\alpha_1, \ldots ,\alpha_m$ be positive numbers such that $\alpha:=\sum_{j=1}^{m}{\alpha_j}\ge1$. If  $\Sigma_1, \ldots ,\Sigma_m$ are families defined by (\ref{new2}) and $\Phi_j=\Psi_j\cdots\Psi_m\Psi_1\cdots\Psi_{j-1}$ for $j=1, \ldots , m$ then for all $r\in\{\rho, \hat{\rho}\}$ the following inequalities hold
\begin{eqnarray}
\nonumber
r(\Sigma_1  \cdots \Sigma_m)\le r(\Phi_1^{(\alpha_1)}
\circ\cdots\circ\Phi_m^{(\alpha_m)})\le r((\Phi_1^n)^{(\alpha_1)}
\circ\cdots\circ(\Phi_m^n)^{(\alpha_m)})^{1/n}\\
\le r(\Psi_1\cdots\Psi_m)^{\alpha}
\label{nejedn}
\end{eqnarray}
\end{corollary}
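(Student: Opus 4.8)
The plan is to obtain Corollary \ref{Bkb} as a direct application of Theorem \ref{endlich}(ii), via exactly the index substitution that was used to derive Corollaries \ref{kathyth} and \ref{Akb} from Theorem \ref{endlich}(i), only now with the hypothesis $\sum_{j}\alpha_j=\alpha\ge 1$ and with nonnegative matrices defining operators on $L\in\mathcal L$.

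First I would take $k:=m$ in Theorem \ref{endlich}(ii) and, for $i,j\in\{1,\dots,m\}$, set $\Psi_{ij}:=\Psi_{i+j-1}$, where the index $i+j-1$ is read modulo $m$ in $\{1,\dots,m\}$. With this choice the $i$-th row of the array is
\[
\Psi_{i1}^{(\alpha_1)}\circ\cdots\circ\Psi_{im}^{(\alpha_m)}=\Psi_i^{(\alpha_1)}\circ\Psi_{i+1}^{(\alpha_2)}\circ\cdots\circ\Psi_{i-1}^{(\alpha_m)}=\Sigma_i
\]
by the definition \eqref{new2}, while the $j$-th column product is $\Psi_{1j}\Psi_{2j}\cdots\Psi_{mj}=\Psi_j\Psi_{j+1}\cdots\Psi_m\Psi_1\cdots\Psi_{j-1}=\Phi_j$. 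Since $L\in\mathcal L$, $\alpha=\sum_{j=1}^m\alpha_j\ge 1$, and the $\Psi_{ij}$ are bounded sets of nonnegative matrices defining operators on $L$, all the Hadamard weighted geometric means occurring below are defined as operators on $L$ and Theorem \ref{endlich}(ii) applies verbatim. It yields, for every $n\in\NN$ and every $r\in\{\rho,\hat\rho\}$,
\[
r(\Sigma_1\cdots\Sigma_m)\le r(\Phi_1^{(\alpha_1)}\circ\cdots\circ\Phi_m^{(\alpha_m)})\le r((\Phi_1^n)^{(\alpha_1)}\circ\cdots\circ(\Phi_m^n)^{(\alpha_m)})^{1/n}\le r(\Phi_1)^{\alpha_1}\cdots r(\Phi_m)^{\alpha_m}.
\]

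It then remains only to identify the last term with $r(\Psi_1\cdots\Psi_m)^{\alpha}$. For each $j$, writing $\Psi_1\cdots\Psi_m=(\Psi_1\cdots\Psi_{j-1})(\Psi_j\cdots\Psi_m)$ and swapping the two blocks, the cyclic-permutation invariance $r(\Psi\Sigma)=r(\Sigma\Psi)$ (valid for all $r\in\{\rho,\hat\rho\}$) gives $r(\Phi_j)=r(\Psi_j\cdots\Psi_m\Psi_1\cdots\Psi_{j-1})=r(\Psi_1\cdots\Psi_m)$. Hence $r(\Phi_1)^{\alpha_1}\cdots r(\Phi_m)^{\alpha_m}=r(\Psi_1\cdots\Psi_m)^{\alpha_1+\cdots+\alpha_m}=r(\Psi_1\cdots\Psi_m)^{\alpha}$, which closes the chain \eqref{nejedn}.

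I do not expect a genuine obstacle here: the argument is the $\alpha\ge 1$ analogue of Corollary \ref{Akb}, and the only points needing care are bookkeeping ones — verifying that the cyclic index convention makes the rows of the array coincide precisely with the families $\Sigma_i$ of \eqref{new2} (and the column products with the $\Phi_j$), and confirming that every Hadamard weighted geometric mean in sight genuinely defines an operator on $L$, which is guaranteed by $L\in\mathcal L$ together with $\alpha\ge 1$ and the hypothesis on the $\Psi_j$.
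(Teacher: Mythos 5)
Your proposal is correct and follows essentially the same route as the paper: the paper obtains Corollary \ref{Bkb} by repeating the argument of Corollaries \ref{kathyth} and \ref{Akb} with Theorem \ref{endlich}(ii) in place of Theorem \ref{endlich}(i), i.e.\ exactly your cyclic identification $\Psi_{ij}=\Psi_{i+j-1}$ (indices mod $m$) so that the rows are the $\Sigma_i$ and the column products are the $\Phi_j$, followed by $r(\Phi_j)=r(\Psi_1\cdots\Psi_m)$ via $r(\Psi\Sigma)=r(\Sigma\Psi)$. Your bookkeeping and the use of $\alpha\ge 1$ to ensure the Hadamard weighted geometric means define operators on $L\in\mathcal L$ are exactly what the cited theorem provides, so there is nothing to add.
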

The following example illustrates that \eqref{nejedn} does not hold in case $\alpha<1$.
\begin{example}{\rm Setting $\Psi_1=\Psi_2=\ldots=\Psi_m=\{A_1\}$, where $A_1=\left[
\begin{matrix}
1 & 1\\
1 & 1\\
\end{matrix}\right]$ and if $r\in\{\rho, \hat{\rho}\}$ and $\Sigma_i=A_1^{(t)}\circ\cdots\circ A_1^{(t)}$ for $i=1, \ldots , m$ then $r(\Sigma_1\cdots\Sigma_m)=2^m>r(\Psi_1\cdots\Psi_m)=(2^m)^{tm}$ for $t<\frac{1}{m}$.}
\end{example}
\section{Results on $L^2(X, \mu)$}
In this section we extend some results from \cite{P19} and \cite{BP21} concerning $L^2(X, \mu)$ spaces.

The notion of joint spectral radius $\hat{\rho}(\Sigma)$ of a bounded sets of operators was introduced by G.-C. Rota and W.G. Strang, 1960, in \cite{RS}, and the norm of the set as the supremum of the norms of its elements (see also \cite{ShT00} and \cite{ShT08}).

Let $\Psi$ be a bounded set of positive kernel operators on $L^2(X, \mu).$ Denote by $\Psi^*$ bounded set of positive kernel operators on $L^2(X, \mu)$ defined by $\Psi^*=\{A^*: A \in\Psi\}.$
\begin{lemma}
Let $\Psi$ be bounded set of positive kernel operators on $L^2(X,\mu).$
 Then we have
\be
\|\Psi\|=\rho(\Psi^*\Psi)^{1/2}=\hat{\rho}(\Psi^*\Psi)^{1/2}
\label{lema}
\ee
\end{lemma}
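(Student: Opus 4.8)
The plan is to squeeze $\|\Psi\|^2$ between $\rho(\Psi^*\Psi)$ and $\hat{\rho}(\Psi^*\Psi)$, exploiting the Hilbert space identity $\|A\|^2=\|A^*A\|=\rho(A^*A)$ together with submultiplicativity of the operator norm. Throughout, $\|\Psi\|=\sup_{A\in\Psi}\|A\|$, which is finite since $\Psi$ is bounded, and I would use the general fact $\rho(\Sigma)\le\hat{\rho}(\Sigma)$ valid for any bounded set $\Sigma$ of bounded operators on a complex Banach space. I would also note at the outset that $\Psi^*\Psi$ is again a bounded set of positive kernel operators on $L^2(X,\mu)$ (immediate from $\Psi$ and $\Psi^*$ being such), so that $\rho$ and $\hat{\rho}$ are defined for it.

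First I would establish the upper bound $\hat{\rho}(\Psi^*\Psi)\le\|\Psi\|^2$. An arbitrary element of $(\Psi^*\Psi)^m$ has the form $C=A_1^*B_1A_2^*B_2\cdots A_m^*B_m$ with $A_i,B_i\in\Psi$, so by submultiplicativity of the operator norm and $\|A^*\|=\|A\|$ we get $\|C\|\le\prod_{i=1}^m\|A_i\|\,\|B_i\|\le\|\Psi\|^{2m}$. Hence $[\sup_{C\in(\Psi^*\Psi)^m}\|C\|]^{1/m}\le\|\Psi\|^2$ for every $m$, and letting $m\to\infty$ gives $\hat{\rho}(\Psi^*\Psi)\le\|\Psi\|^2$; combined with $\rho\le\hat{\rho}$ this also yields $\rho(\Psi^*\Psi)\le\|\Psi\|^2$.

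Next I would prove the matching lower bound $\rho(\Psi^*\Psi)\ge\|\Psi\|^2$, using only the ``diagonal'' products: for any $A\in\Psi$ the operator $A^*A$ lies in $\Psi^*\Psi$ and is positive and self-adjoint on the Hilbert space $L^2(X,\mu)$, so $\rho(A^*A)=\|A^*A\|=\|A\|^2$. Since $\rho(\Psi^*\Psi)=\sup_{k\in\NN}[\sup_{C\in(\Psi^*\Psi)^k}\rho(C)]^{1/k}\ge\sup_{C\in\Psi^*\Psi}\rho(C)\ge\sup_{A\in\Psi}\|A\|^2=\|\Psi\|^2$, the bound follows. Combining the three inequalities gives $\|\Psi\|^2\le\rho(\Psi^*\Psi)\le\hat{\rho}(\Psi^*\Psi)\le\|\Psi\|^2$, so all three quantities coincide, and taking square roots yields \eqref{lema}.

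I do not expect a serious obstacle here; the only point requiring a moment's care is that $\Psi^*\Psi$ contains the ``cross'' products $A^*B$ with $A\neq B$, which need not be self-adjoint. This is harmless: the upper bound controls all of them simultaneously through the submultiplicative norm estimate, while the lower bound needs only the self-adjoint elements $A^*A$, for which spectral radius equals norm.
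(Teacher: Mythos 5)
Your argument is correct and follows essentially the same route as the paper: both squeeze $\|\Psi\|^2$ between $\rho(\Psi^*\Psi)$ and $\hat{\rho}(\Psi^*\Psi)$ using the identity $\|A\|^2=\|A^*A\|=\rho(A^*A)$ for the diagonal elements $A^*A\in\Psi^*\Psi$ together with $\rho\le\hat{\rho}$ and the submultiplicative norm bound $\hat{\rho}(\Psi^*\Psi)\le\|\Psi^*\|\|\Psi\|=\|\Psi\|^2$. The only cosmetic difference is that you obtain the lower bound from the length-one products directly (via the supremum characterization of $\rho$), whereas the paper passes through the powers $(A^*A)^m$; both are valid.
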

\begin{proof}
Let $\Psi$ be a bounded set of positive kernel operators on $L^2(X, \mu)$. Using well-known equalities for positive kernel operator on $L^2(X, \mu)$
\be
\nonumber
\|A\|^2=\|AA^*\|=\|A^*A\|=\rho(AA^*)=\rho(A^*A)
\label{poznato}
\ee
we obtain
\begin{eqnarray}
\nonumber
\|\Psi\|=\sup_{A\in\Psi}\|A\|=\sup_{A\in\Psi}\rho(A^*A)^{\frac{1}{2}}=(\sup_{A\in\Psi}\rho((A^*A)^m)^{\frac{1}{m}})^{\frac{1}{2}}\le(\sup_{B\in(\Psi^*\Psi)^m}(\rho(B))^{\frac{1}{m}})^{\frac{1}{2}}=\\
\nonumber
\rho(\Psi^*\Psi)^{\frac{1}{2}}\le\hat{\rho}(\Psi^*\Psi)^{\frac{1}{2}}\le\|\Psi^*\Psi\|^{\frac{1}{2}}\le(\|\Psi^*\|\|\Psi\|)^{\frac{1}{2}}=\|\Psi\|.\;\;\;\;\;\;\;\;\;\;\;\;\;\;\;\;\;\;\;\;\;\;\;\;\;\;\;\;\;\;\;\;\;\;\;\;\;\;
\end{eqnarray} 

\end{proof}
The following theorem generalizes \cite[Theorem 4.6]{P19}. In fact, the same technique of the proof is applied by using Theorem \ref{endlich} instead of Theorem \ref{thanfang}.
\begin{theorem}
\label{first}
Let $\Psi_1, \ldots ,\Psi_m$ be bounded sets of positive kernel operators on $L^2(X, \mu)$ and let $m\in \NN$. For $r \in\{\rho,\hat{\rho}\}$ we have

If $m$ is even, then
\begin{align}
\nonumber
\|\Psi_1^{(\frac{1}{m})}\circ\cdots\circ\Psi_m^{(\frac{1}{m})}\| \le (r(\Psi_1^*\Psi_2\Psi_3^*\Psi_4\cdots\Psi_{m-1}^*\Psi_m)r(\Psi_1\Psi_2^*\Psi_3\Psi_4^*\cdots\Psi_{m-1}\Psi_m^*))^{\frac{1}{2m}}\\
=(r(\Psi_1^*\Psi_2\Psi_3^*\Psi_4\cdots\Psi_{m-1}^*\Psi_m)r(\Psi_m\Psi_{m-1}^*\cdots\Psi_4\Psi_3^*\Psi_2\Psi_1^*))^{\frac{1}{2m}}\;\;\;\;\;\;\;\;\;\;\;\;\;\;\;\;\;\;\;\;\;\;\;\;\;\;\;\;\;\;\;
\label{popravka}
\end{align}
If $m$ is odd, then
\begin{align}
\nonumber
\|\Psi_1^{(\frac{1}{m})}\circ\cdots\circ\Psi_m^{(\frac{1}{m})}\|\;\;\;\;\;\;\;\;\;\;\;\;\;\;\;\;\;\;\;\;\;\;\;\;\;\;\;\;\;\;\;\;\;\;\;\;\;\;\;\;\;\;\;\;\;\;\;\;\;\;\;\;\;\;\;\;\;\;\;\;\;\;\;\;\;\;\;\;\;\;\;\;\;\;\;\;\;\;\;\;\;\;\;\;\;\;\;\;\;\;\;\; \\
\le r^{\frac{1}{2m}}(\Psi_1\Psi_2^*\Psi_3\Psi_4^*\cdots\Psi_{m-2}\Psi_{m-1}^*\Psi_m\Psi_1^*\Psi_2\Psi_3^*\Psi_4\cdots\Psi_{m-2}^*\Psi_{m-1}\Psi_m^*)\;\;\;\;\;\;\;\;\;\;\;\;\;\;\;\;\;\;\;\;\;
\label{ispravka}
\end{align}
\end{theorem}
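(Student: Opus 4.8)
The plan is to reduce the statement to the Lemma above together with Theorem~\ref{endlich}(i), exploiting the freedom to permute the factors in a Hadamard geometric mean with equal exponents. Write $\Sigma:=\Psi_1^{(1/m)}\circ\cdots\circ\Psi_m^{(1/m)}$. Since on $L^2(X,\mu)$ the adjoint of a positive kernel operator is again a positive kernel operator, and since taking adjoints commutes with forming the Hadamard geometric mean (the kernel of $A^*$ is $(x,y)\mapsto a(y,x)$), we have $\Sigma^*=(\Psi_1^*)^{(1/m)}\circ\cdots\circ(\Psi_m^*)^{(1/m)}$, a bounded set of positive kernel operators on $L^2(X,\mu)$. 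By the Lemma, $\|\Sigma\|=r(\Sigma^*\Sigma)^{1/2}$ for the given $r\in\{\rho,\hat\rho\}$. The key point is that, because all Hadamard exponents equal $1/m$ and $\circ$ is commutative, inside any product every occurrence of $\Sigma$ (or of $\Sigma^*$) may have its $m$ Hadamard factors listed in whatever order we choose, independently of the other occurrences; this is precisely the data we will tune when applying Theorem~\ref{endlich}(i).

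For even $m$, say $m=2\ell$, I would start from $\|\Sigma\|^m=r(\Sigma^*\Sigma)^{\ell}=r\bigl((\Sigma^*\Sigma)^{\ell}\bigr)$ and note that $(\Sigma^*\Sigma)^{\ell}$ is the product of the $m$ bounded sets $\Sigma^*\,\Sigma\,\Sigma^*\,\Sigma\cdots\Sigma^*\,\Sigma$, i.e.\ a product of $m$ Hadamard geometric means (weights $1/m$) of the families $\Psi_1^*,\dots,\Psi_m^*$ (in the odd factors) and $\Psi_1,\dots,\Psi_m$ (in the even factors). I would then choose the orderings so that the product read off the $j$-th Hadamard slot, across all $m$ factors, is the $\bigl(2(j-1)\bigr)$-fold cyclic shift of $X:=\Psi_1^*\Psi_2\Psi_3^*\Psi_4\cdots\Psi_{m-1}^*\Psi_m$ when $1\le j\le\ell$, and the $\bigl(2(j-\ell)-1\bigr)$-fold cyclic shift of $Y:=\Psi_1\Psi_2^*\Psi_3\Psi_4^*\cdots\Psi_{m-1}\Psi_m^*$ when $\ell<j\le m$. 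Two checks make this legitimate: (a) the adjoints in an even cyclic shift of $X$ and in an odd cyclic shift of $Y$ sit precisely in the odd positions, which is where the $\Sigma^*$-factors are, so each factor really is a Hadamard mean of starred (resp.\ unstarred) operators; and (b) for each fixed factor the assignment $j\mapsto(\text{index placed in slot }j)$ is a bijection of $\{1,\dots,m\}$, since increasing the shift amount by $2$ increases the index by $2$ and $\{0,2,\dots,2\ell-2\}$ runs over one residue class mod $m$ while its translate runs over the other. Theorem~\ref{endlich}(i), together with $r(\Psi\Phi)=r(\Phi\Psi)$ to replace each slot product by $X$ or $Y$, then gives
\[ r\bigl((\Sigma^*\Sigma)^{\ell}\bigr)\le r(X)^{\ell/m}r(Y)^{\ell/m}=(r(X)r(Y))^{1/2}, \]
so that $\|\Sigma\|\le(r(X)r(Y))^{1/(2m)}$, which is \eqref{popravka}. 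The alternative form in \eqref{popravka} follows because the adjoint set of $Y$ is $\Psi_m\Psi_{m-1}^*\cdots\Psi_2\Psi_1^*$ and $r(\Phi^*)=r(\Phi)$ holds on $L^2(X,\mu)$ for $r\in\{\rho,\hat\rho\}$, an operator and its adjoint having the same norm and the same spectral radius.

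For odd $m$ the scheme is the same but I would raise to the power $2m$: $\|\Sigma\|^{2m}=r(\Sigma^*\Sigma)^m=r\bigl((\Sigma^*\Sigma)^m\bigr)$, with $(\Sigma^*\Sigma)^m$ a product of $2m$ Hadamard geometric means $\Sigma^*\,\Sigma\cdots\Sigma^*\,\Sigma$. This time I would arrange the orderings so that every one of the $m$ Hadamard slots produces a cyclic shift of the single length-$2m$ word $Z:=\Psi_1\Psi_2^*\Psi_3\Psi_4^*\cdots\Psi_{m-2}\Psi_{m-1}^*\Psi_m\,\Psi_1^*\Psi_2\Psi_3^*\Psi_4\cdots\Psi_{m-2}^*\Psi_{m-1}\Psi_m^*$; concretely, the $j$-th slot gets the $(2j-1)$-fold cyclic shift of $Z$. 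As before, the odd shifts of $Z$ carry their adjoints exactly in the odd positions (matching the $\Sigma^*$-factors), and for each fixed factor $j\mapsto(\text{index in slot }j)$ is a bijection of $\{1,\dots,m\}$ — here using that $m$ is odd, so $\gcd(2,m)=1$ and $\{0,2,\dots,2m-2\}$ is all of the residues mod $m$. Theorem~\ref{endlich}(i) and cyclic invariance then give $r\bigl((\Sigma^*\Sigma)^m\bigr)\le r(Z)^{m\cdot(1/m)}=r(Z)$, hence $\|\Sigma\|\le r(Z)^{1/(2m)}$, which is \eqref{ispravka}.

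I expect the only real work to be the bookkeeping in the last two paragraphs: writing the chosen factor orderings down explicitly and verifying simultaneously that they are honest permutations inside each factor and that the resulting slot products are the claimed cyclic rotations of $X,Y$ (even case) or of $Z$ (odd case), with every adjoint landing where a $\Sigma^*$-factor stands. Everything else — the reduction via the Lemma and the passage to a product of Hadamard geometric means — is immediate once Theorem~\ref{endlich}(i) is available.
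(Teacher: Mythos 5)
Your proposal is correct and follows essentially the same route as the paper's proof: reduce via the Lemma to $r(\Sigma^*\Sigma)$, expand $(\Sigma^*\Sigma)^{m/2}$ (even case) resp.\ $(\Sigma^*\Sigma)^{m}$ (odd case) as a product of Hadamard geometric means whose factors are cyclically reordered, apply Theorem~\ref{endlich}(i), and finish with cyclic invariance of $r$ (and $r(\Phi^*)=r(\Phi)$ for the second form in \eqref{popravka}). The only difference is cosmetic bookkeeping: the paper shifts each factor cyclically so the Hadamard slots alternately yield cyclic shifts of $\Psi_1^*\Psi_2\cdots\Psi_{m-1}^*\Psi_m$ and words cyclically equivalent to $\Psi_1\Psi_2^*\cdots\Psi_{m-1}\Psi_m^*$, whereas you group the slots differently, and your bijection/parity checks do verify that your arrangement is legitimate.
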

\begin{proof}
 Let $r\in\{\rho, \hat{\rho}\}$. If $m$ is even, then we obtain by using 
 Theorem \ref{endlich}(i)
$$\left(\left(\Psi_1^{(\frac{1}{m})}\circ\Psi_2^{(\frac{1}{m})}\circ\cdots\circ\Psi_m^{(\frac{1}{m})}\right)^*\left(\Psi_1^{(\frac{1}{m})}\circ\Psi_2^{(\frac{1}{m})}\circ\cdots\circ\Psi_m^{(\frac{1}{m})}\right)\right)^{\frac{m}{2}}=$$
$$\left((\Psi_1^*)^{(\frac{1}{m})}\circ(\Psi_2^*)^{(\frac{1}{m})}\circ\cdots\circ(\Psi_m^*)^{(\frac{1}{m})}\right)\left(\Psi_2^{(\frac{1}{m})}\circ\Psi_3^{(\frac{1}{m})}\circ\cdots\circ\Psi_1^{(\frac{1}{m})}\right)$$
$$\left((\Psi_3^*)^{(\frac{1}{m})}\circ(\Psi_4^*)^{(\frac{1}{m})}\circ\cdots\circ(\Psi_2^*)^{(\frac{1}{m})}\right)\left(\Psi_4^{(\frac{1}{m})}\circ\Psi_5^{(\frac{1}{m})}\circ\cdots\circ\Psi_3^{(\frac{1}{m})}\right)\cdots$$
$$\left((\Psi_{m-1}^*)^{(\frac{1}{m})}\circ(\Psi_m^*)^{(\frac{1}{m})}\circ\cdots\circ(\Psi_{m-2}^*)^{(\frac{1}{m})}\right)\left(\Psi_m^{(\frac{1}{m})}\circ\Psi_1^{(\frac{1}{m})}\circ\cdots\circ\Psi_{m-1}^{(\frac{1}{m})}\right).$$
It follows from (\ref{lema})
\begin{align}
\nonumber
\|\Psi_1^{(\frac{1}{m})}\circ\Psi_2^{(\frac{1}{m})}\circ\cdots\circ\Psi_m^{(\frac{1}{m})}\|^{m}\;\;\;\;\;\;\;\;\;\;\;\;\;\;\;\;\;\;\;\;\;\;\;\;\;\;\;\;\;\;\;\;\;\;\;\;\;\;\;\;\;\;\;\;\;\;\;\;\;\;\;\;\;\;\;\;\;\;\;\;\;\;\;\\
\nonumber
=r\left(\left(\Psi_1^{(\frac{1}{m})}\circ\Psi_2^{(\frac{1}{m})}\circ\cdots\circ\Psi_m^{(\frac{1}{m})}\right)^*\left(\Psi_1^{(\frac{1}{m})}\circ\Psi_2^{(\frac{1}{m})}\circ\cdots\circ\Psi_m^{(\frac{1}{m})}\right)\right)^{\frac{m}{2}}\;\;\;\;\\
\nonumber
\le r(\Sigma)\le r(\Psi_1^*\Psi_2\Psi_3^*\Psi_4\cdots\Psi_{m-1}^*\Psi_m)^{\frac{1}{m}}r(\Psi_2^*\Psi_3\Psi_4^*\Psi_5\cdots\Psi_m^*\Psi_1)^{\frac{1}{m}}\cdots\\
\nonumber
r(\Psi_{m-1}^*\Psi_m\Psi_1^*\Psi_2\cdots\Psi_{m-3}^*\Psi_{m-2})^{\frac{1}{m}}r(\Psi_m^*\Psi_1\Psi_2^*\Psi_3\cdots\Psi_{m-2}^*\Psi_{m-1})^{\frac{1}{m}}\;\;\;\\
\nonumber
=r^{\frac{1}{2}}(\Psi_1^*\Psi_2\Psi_3^*\Psi_4\cdots\Psi_{m-1}^*\Psi_m)r^{\frac{1}{2}}(\Psi_1\Psi_2^*\Psi_3\Psi_4^*\cdots\Psi_{m-1}\Psi_m^*)\;\;\;\;\;\;\;\;\;\;\;\;\;\;\\
\nonumber
(r(\Psi_1^*\Psi_2\Psi_3^*\Psi_4\cdots\Psi_{m-1}^*\Psi_m)r(\Psi_m\Psi_{m-1}^*\cdots\Psi_4\Psi_3^*\Psi_2\Psi_1^*))^{\frac{1}{2}},\;\;\;\;\;\;\;\;\;\;\;\;\;\;\;\;\;
\end{align}
where 
$$\Sigma:=(\Psi_1^*\Psi_2\Psi_3^*\Psi_4\cdots\Psi_{m-1}^*\Psi_m)^{(\frac{1}{m})}\circ(\Psi_2^*\Psi_3\Psi_4^*\Psi_5\cdots\Psi_m^*\Psi_1)^{(\frac{1}{m})}\circ\cdots\circ$$
$$(\Psi_{m-1}^*\Psi_m\Psi_1^*\Psi_2\cdots\Psi_{m-3}^*\Psi_{m-2})^{(\frac{1}{m})}\circ(\Psi_m^*\Psi_1\Psi_2^*\Psi_3\cdots\Psi_{m-2}^*\Psi_{m-1})^{(\frac{1}{m})}$$
which completes the proof of (\ref{popravka}).

If $m$ is odd, we have
\begin{align}
\nonumber
\left(\left(\Psi_1^{(\frac{1}{m})}\circ\Psi_2^{(\frac{1}{m})}\circ\cdots\circ\Psi_m^{(\frac{1}{m})}\right)^*\left(\Psi_1^{(\frac{1}{m})}\circ\Psi_2^{(\frac{1}{m})}\circ\cdots\circ\Psi_m^{(\frac{1}{m})}\right)\right)^{m}\;\;\;\;\;\;\;\;\;\;\;\;\;\;\;\;\;\;\;\;\;\;\;\;\;\;\;\\
\nonumber
=\left((\Psi_1^*)^{(\frac{1}{m})}\circ(\Psi_2^*)^{(\frac{1}{m})}\circ\cdots\circ(\Psi_m^*)^{(\frac{1}{m})}\right)\left(\Psi_2^{(\frac{1}{m})}
\circ\cdots\circ\Psi_m^{(\frac{1}{m})}\circ\Psi_1^{(\frac{1}{m})}\right)\;\;\;\;\;\;\;\;\;\;\;\;\;\;\;\;\;\;\\
\nonumber
\left((\Psi_3^*)^{(\frac{1}{m})}\circ(\Psi_4^*)^{(\frac{1}{m})}\circ\cdots\circ(\Psi_2^*)^{(\frac{1}{m})}\right)\left(\Psi_4^{(\frac{1}{m})}\circ\Psi_5^{(\frac{1}{m})}\circ\cdots\circ\Psi_3^{(\frac{1}{m})}\right)\cdots\;\;\;\;\;\;\;\;\;\;\;\;\;\;\;\;\;\;\\
\nonumber
\left((\Psi_{m-2}^*)^{(\frac{1}{m})}\circ(\Psi_{m-1}^*)^{(\frac{1}{m})}\circ\cdots\circ(\Psi_{m-3}^*)^{(\frac{1}{m})}\right)\left(\Psi_{m-1}^{(\frac{1}{m})}\circ\Psi_m^{(\frac{1}{m})}\circ\cdots\circ\Psi_{m-2}^{(\frac{1}{m})}\right)\;\;\;\;\;\;\;\;\;\\
\nonumber
\left((\Psi_m^*)^{(\frac{1}{m})}\circ(\Psi_1^*)^{(\frac{1}{m})}\circ\cdots\circ(\Psi_{m-1}^*)^{(\frac{1}{m})}\right)\left(\Psi_1^{(\frac{1}{m})}\circ\Psi_2^{(\frac{1}{m})}\circ\cdots\circ\Psi_{m-1}^{(\frac{1}{m})}\circ\Psi_m^{(\frac{1}{m})}\right)\;\;\;\;\;\;\\
\nonumber
\left((\Psi_2^*)^{(\frac{1}{m})}\circ(\Psi_3^*)^{(\frac{1}{m})}\circ\cdots\circ(\Psi_1^*)^{(\frac{1}{m})}\right)\left(\Psi_3^{(\frac{1}{m})}\circ\Psi_4^{(\frac{1}{m})}\circ\cdots\circ\Psi_1^{(\frac{1}{m})}\circ\Psi_2^{(\frac{1}{m})}\right)\cdots\;\;\;\;\;\;\;\\
\nonumber
\left((\Psi_{m-1}^*)^{(\frac{1}{m})}\circ(\Psi_{m}^*)^{(\frac{1}{m})}\circ\cdots\circ(\Psi_{m-2}^*)^{(\frac{1}{m})}\right)\left(\Psi_m^{(\frac{1}{m})}\circ\Psi_1^{(\frac{1}{m})}\circ\cdots\circ\Psi_{m-2}^{(\frac{1}{m})}\circ\Psi_{m-1}^{(\frac{1}{m})}\right)\\
\end{align}
It follows from (\ref{lema}) and 
Theorem \ref{endlich}(i) that
\begin{align}
\nonumber
\|\Psi_1^{(\frac{1}{m})}\circ\Psi_2^{(\frac{1}{m})}\circ\cdots\circ\Psi_m^{(\frac{1}{m})}\|^{2m}=\;\;\;\;\;\;\;\;\;\;\;\;\;\;\;\;\;\;\;\;\;\;\;\;\;\;\;\;\;\;\;\;\;\;\;\;\;\;\;\;\;\;\;\;\;\;\;\;\;\;\;\;\;\;\;\;\;\;\;\;\\
\nonumber
r\left(\left(\Psi_1^{(\frac{1}{m})}\circ\Psi_2^{(\frac{1}{m})}\circ\cdots\circ\Psi_m^{(\frac{1}{m})}\right)^*\left(\Psi_1^{(\frac{1}{m})}\circ\Psi_2^{(\frac{1}{m})}\circ\cdots\circ\Psi_m^{(\frac{1}{m})}\right)\right)^m\le r(\Omega)\;\;\\
\nonumber
r(\Psi_1^*\Psi_2\Psi_3^*\Psi_4\cdots\Psi_{m-1}\Psi_m^*\Psi_1\Psi_2^*\Psi_3\Psi_4^*\cdots\Psi_{m-1}^*\Psi_m)=\;\;\;\;\;\;\;\;\;\;\;\;\;\;\;\;\;\;\;\;\;\;\;\;\;\;\;\;\\
\nonumber
r(\Psi_1\Psi_2^*\Psi_3\cdots\Psi_{m-1}^*\Psi_m\Psi_1^*\Psi_2\cdots\Psi_m^*)\;\;\;\;\;\;\;\;\;\;\;\;\;\;\;\;\;\;\;\;\;\;\;\;\;\;\;\;\;\;\;\;\;\;\;\;\;\;\;\;\;\;\;\;\;\;\;\;\;\;\;\;\;
\end{align}
where
$$\Omega:=(\Psi_1^*\Psi_2\Psi_3^*\Psi_4\cdots\Psi_{m-1}\Psi_m^*\Psi_1\Psi_2^*\Psi_3\cdots\Psi_{m-1}^*\Psi_m)^{(\frac{1}{m})}\circ$$
$$(\Psi_2^*\Psi_3\Psi_4^*\Psi_5\cdots\Psi_m\Psi_1^*\Psi_2\Psi_3^*\cdots\Psi_m^*\Psi_1)^{(\frac{1}{m})}\circ\cdots\circ$$
$$(\Psi_m^*\Psi_1\Psi_2^*\Psi_3\cdots\Psi_{m-1}^*\Psi_m\Psi_1^*\Psi_2\cdots\Psi_{m-2}^*\Psi_{m-1})^{(\frac{1}{m})}$$
which proves (\ref{ispravka}).
\end{proof}
The following theorem generalizes \cite[Theorem 4.8]{P19} and can be proved in a similar way as Theorem \ref{first} by applying 
Theorem \ref{endlich}(ii) instead of 
Theorem \ref{endlich}(i).
\begin{theorem}
\label{thkate}
Let $L\in \mathcal{L}$ and let $\Psi_1, \ldots , \Psi_m$ be bounded sets of nonnegative matrices that define operators on $l^2(R)$ and let $\alpha \ge \frac{1}{m}$ and $r \in \{\rho,\hat{\rho}\}$.

If $m$ is even, then
\begin{align}
\nonumber
\|\Psi_1^{(\alpha)}\circ\Psi_2^{(\alpha)}\circ\cdots\circ\Psi_m^{(\alpha)}\|\le r^{\frac{1}{m}}(\Sigma_{\alpha})\le(r(\Psi_1^*\Psi_2\Psi_3^*\Psi_4\cdots\Psi_{m-1}^*\Psi_m)\\
r(\Psi_m\Psi_{m-1}^*\cdots\Psi_4\Psi_3^*\Psi_2\Psi_1^*))^{\frac{\alpha}{2}},
\label{austausch}
\end{align}
where
\begin{align}
\nonumber
\Sigma_{\alpha}=(\Psi_1^*\Psi_2\Psi_3^*\Psi_4\cdots\Psi_{m-1}^*\Psi_m)^{(\alpha)}\circ(\Psi_2^*\Psi_3\Psi_4^*\Psi_5\cdots\Psi_m^*\Psi_1)^{(\alpha)}\circ\cdots\circ\\
\nonumber
(\Psi_{m-1}^*\Psi_m\Psi_1^*\Psi_2\cdots\Psi_{m-3}^*\Psi_{m-2})^{(\alpha)}\circ(\Psi_m^*\Psi_1\Psi_2^*\Psi_3\cdots\Psi_{m-2}^*\Psi_{m-1})^{(\alpha)}
\end{align}

If $m$ is odd then
\begin{align}
\nonumber
\|\Psi_1^{(\alpha)}\circ\Psi_2^{(\alpha)}\circ\cdots\circ\Psi_m^{(\alpha)}\|\le r^{\frac{1}{2m}}(\Omega_{\alpha})\le\;\;\;\;\;\;\;\;\;\;\;\;\;\;\;\;\;\;\;\\
r^{\frac{\alpha}{2}}(\Psi_1\Psi_2^*\Psi_3\Psi_4^*\cdots\Psi_{m-2}\Psi_{m-1}^*\Psi_m\Psi_1^*\Psi_2\cdots\Psi_{m-1}\Psi_m^*)
\label{ausflug}
\end{align}
where
\begin{align}
\nonumber
\Omega_{\alpha}=(\Psi_1^*\Psi_2\Psi_3^*\Psi_4\cdots\Psi_{m-1}\Psi_m^*\Psi_1\Psi_2^*\Psi_3\cdots\Psi_{m-1}^*\Psi_m)^{(\alpha)}\circ\\
\nonumber
(\Psi_2^*\Psi_3\Psi_4^*\Psi_5\cdots\Psi_{m-1}^*\Psi_m\Psi_1^*\Psi_2\Psi_3^*\cdots\Psi_m^*\Psi_1)^{(\alpha)}\circ\cdots\circ\\
\nonumber
(\Psi_m^*\Psi_1\Psi_2^*\Psi_3\Psi_4^*\cdots\Psi_{m-1}^*\Psi_m\Psi_1^*\Psi_2\cdots\Psi_{m-2}^*\Psi_{m-1})^{(\alpha)}.
\end{align}
\end{theorem}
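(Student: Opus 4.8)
The plan is to run the proof of Theorem \ref{first} essentially verbatim, but feed every Hadamard geometric mean that occurs into Theorem \ref{endlich}(ii) rather than Theorem \ref{endlich}(i); this is legitimate exactly because of the hypothesis $\alpha\ge\frac1m$, so that each such mean is built from $m$ operators with the common exponent $\alpha$ and therefore has exponents summing to $m\alpha\ge1$, which is the hypothesis of Theorem \ref{endlich}(ii). First I would set $B:=\Psi_1^{(\alpha)}\circ\cdots\circ\Psi_m^{(\alpha)}$. Since $l^2(R)\in\mathcal L$ and the $\Psi_j$ are bounded sets of nonnegative matrices defining operators on $l^2(R)$, inequality \eqref{ungl1} (valid for nonnegative matrices once the exponents sum to at least $1$, by part (ii) of the second theorem above) shows that $B$ is again a bounded set of nonnegative matrices defining operators on $l^2(R)$, hence a bounded set of positive kernel operators; likewise $B^{*}=(\Psi_1^{*})^{(\alpha)}\circ\cdots\circ(\Psi_m^{*})^{(\alpha)}$, using that for a nonnegative matrix $(A^{(\alpha)})^{*}=(A^{*})^{(\alpha)}$ (an entrywise power commutes with transposition) and that $*$ distributes over $\circ$. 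Because $l^2(R)$ is an $L^2$-space, Lemma \eqref{lema} applies to $B$ and gives $\|B\|=r(B^{*}B)^{1/2}$ for $r\in\{\rho,\hat\rho\}$, so $\|B\|^{m}=r\big((B^{*}B)^{m/2}\big)$ when $m$ is even and $\|B\|^{2m}=r\big((B^{*}B)^{m}\big)$ when $m$ is odd, using $r(\Sigma^{k})=r(\Sigma)^{k}$.

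For $m$ even I would expand $(B^{*}B)^{m/2}$ as an ordered product of $m$ factors, alternately of the shapes $(\Psi_1^{*})^{(\alpha)}\circ\cdots\circ(\Psi_m^{*})^{(\alpha)}$ and $\Psi_1^{(\alpha)}\circ\cdots\circ\Psi_m^{(\alpha)}$, and then permute the Hadamard slots inside each factor (permissible, $\circ$ being commutative) so that the $j$-th slot read down all $m$ factors produces precisely the $j$-th Hadamard factor $C_j=\Psi_j^{*}\Psi_{j+1}\Psi_{j+2}^{*}\cdots$ (indices mod $m$) of $\Sigma_\alpha$. Theorem \ref{endlich}(ii), with $k=m$ and all exponents equal to $\alpha$, then gives $r\big((B^{*}B)^{m/2}\big)\le r(\Sigma_\alpha)\le\prod_{j=1}^{m}r(C_j)^{\alpha}$. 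Since $m$ is even, a cyclic shift by two slots preserves the starred/unstarred alternation, so $m/2$ of the $C_j$ are cyclic permutations of $\Psi_1^{*}\Psi_2\Psi_3^{*}\cdots\Psi_{m-1}^{*}\Psi_m$ and the other $m/2$ of $\Psi_1\Psi_2^{*}\cdots\Psi_{m-1}\Psi_m^{*}$; using $r(\Psi\Sigma)=r(\Sigma\Psi)$ and $r(\Sigma^{*})=r(\Sigma)$ (for $r\in\{\rho,\hat\rho\}$) the product collapses to $\big(r(\Psi_1^{*}\Psi_2\cdots\Psi_m)\,r(\Psi_m\Psi_{m-1}^{*}\cdots\Psi_2\Psi_1^{*})\big)^{m\alpha/2}$, and taking $m$-th roots yields \eqref{austausch}. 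For $m$ odd I would run the identical scheme on $(B^{*}B)^{m}$, now an ordered product of $2m$ factors whose Hadamard slots are re-indexed so that the $j$-th column is the length-$2m$ product $D_j$ that is the $j$-th Hadamard factor of $\Omega_\alpha$; Theorem \ref{endlich}(ii) (with $k=2m$, exponents $\alpha$) gives $r\big((B^{*}B)^{m}\big)\le r(\Omega_\alpha)\le\prod_{j=1}^{m}r(D_j)^{\alpha}$, and since $m$ is odd the single wrap-around flips the starred/unstarred alternation, so every $D_j$ already contains each $\Psi_i$ once starred and once unstarred and all $m$ of them are cyclic permutations of one another; the product collapses to $r\big(\Psi_1\Psi_2^{*}\cdots\Psi_{m-1}^{*}\Psi_m\Psi_1^{*}\Psi_2\cdots\Psi_{m-1}\Psi_m^{*}\big)^{m\alpha}$, and $2m$-th roots give \eqref{ausflug}.

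The one genuinely delicate point is the middle step, the combinatorial bookkeeping: choosing, for each of the $m$ (resp.\ $2m$) factors in the expansion, the cyclic permutation of its Hadamard slots that makes the columns come out to be exactly the products displayed in $\Sigma_\alpha$ and $\Omega_\alpha$, and then sorting those columns into cyclic-equivalence classes --- two classes of size $m/2$ when $m$ is even, a single class of size $m$ when $m$ is odd --- the dichotomy being precisely whether a shift by the natural period realigns a starred slot with a starred one. Everything else (boundedness of $B$ and of $B^{*}B$, the distributivity of $*$ over $\circ$, the final appeals to $r(\Psi\Sigma)=r(\Sigma\Psi)$, $r(\Sigma^{*})=r(\Sigma)$ and $r(\Sigma^{k})=r(\Sigma)^{k}$, and the applicability of Lemma \eqref{lema} on the $L^2$-space $l^2(R)$) is routine and already contained in the material quoted above.
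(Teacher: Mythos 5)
Your proposal is correct and is exactly the argument the paper intends: the paper itself only remarks that Theorem \ref{thkate} ``can be proved in a similar way as Theorem \ref{first} by applying Theorem \ref{endlich}(ii) instead of Theorem \ref{endlich}(i)'', and your write-up carries out precisely that scheme (Lemma \ref{lema} applied to $\Psi_1^{(\alpha)}\circ\cdots\circ\Psi_m^{(\alpha)}$, the cyclic re-slotting of the Hadamard factors in $(B^*B)^{m/2}$ resp.\ $(B^*B)^{m}$, and the exponent sum $m\alpha\ge 1$ legitimizing Theorem \ref{endlich}(ii)), including the correct parity bookkeeping that yields two cyclic classes for $m$ even and a single class for $m$ odd.
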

\begin{example}{\rm Let $\Psi_1=\Psi_2=\Psi_3=\{T_0\}$, where $T_0=\left[
		\begin{matrix}
			0 & 0\\
			1 & 1\\
		\end{matrix}\right]$ and $r\in\{\rho, \hat{\rho}\}.$ Then $\Psi_1^{(\alpha)}\circ\Psi_2^{(\alpha)}\circ\Psi_3^{(\alpha)}=T_1$ for all $\alpha>0$ and $\|T_1\|=\sqrt{2}.$ However $r(\Psi_1\Psi_2^*\Psi_3\Psi_1^*\Psi_2\Psi_3^*)=8$ which shows that \eqref{ausflug} is not valid for $\alpha<\frac{1}{3}.$}
\end{example}
If m is odd, the following result gives a refinement of the inequality \eqref{ispravka} that differs from the refinement given in Theorem \ref{first}. In fact, Theorem 3.5 generalizes \cite[Corollary 3.11]{BP21}. By applying Theorem \ref{endlich} the proof goes similarly as in \cite[Corollary 3.11]{BP21}. 
\begin{theorem}
\label{kety}
Let m be odd and let $\Psi_1, \ldots, \Psi_m$ be bounded sets of positive kernel operators on $L^2(X, \mu)$. For $r\in\{\rho, \hat{\rho}\}$ we have
\begin{align}
\nonumber
\|\Psi_1^{(\frac{1}{m})}\circ\cdots\circ\Psi_m^{(\frac{1}{m})}\|\le\;\;\;\;\;\;\;\;\;\;\;\;\;\;\;\;\;\;\;\;\;\;\;\;\;\;\;\;\;\;\;\;\;\;\;\;\;\;\;\;\;\;\;\;\;\;\;\;\;\;\;\;\;\;\;\;\;\;\;\;\;\;\;\;\;\;\;\;\;\;\;\;\;\;\;\;\;\;\;\;\;\\
\nonumber
	r^{\frac{1}{2}}((\Psi_1\Psi_2^*)^{(\frac{1}{m})}\circ\cdots\circ
(\Psi_{m}\Psi_1^*)^{(\frac{1}{m})}\circ(\Psi_2\Psi_3^*)^{(\frac{1}{m})}\circ\cdots\circ(\Psi_{m-1}\Psi_{m}^*)^{(\frac{1}{m})})\le\;\;\;\;\;\;\;\;\;\\
r^{\frac{1}{2m}}(\Omega_1^{(\frac{1}{m})}\circ\cdots\circ\Omega_m^{(\frac{1}{m})})\le r^{\frac{1}{2m}}(\Psi_1\Psi_2^*
\cdots\Psi_{m-2}\Psi_{m-1}^*\Psi_m\Psi_1^*
\cdots\Psi_{m-2}^*\Psi_{m-1}\Psi_m^*)
\label{laufen}
\end{align}
where 
\begin{align}
\nonumber
\Omega_j=\Psi_{2j-1}\Psi_{2j}^*
\cdots\Psi_{m-2}\Psi_{m-1}^*\Psi_m\Psi_1^*\Psi_2\Psi_3^*\cdots\Psi_{m-1}\Psi_{m}^*\Psi_1\Psi_2^*\cdots\Psi_{2j-3}\Psi_{2j-2}^*
\end{align}
for\; $1\le j\le \frac{m-1}{2}$,
\begin{align}
\nonumber
\Omega_{\frac{m+1}{2}}=\Psi_m\Psi_1^*\Psi_2\Psi_3^*\cdots\Psi_{m-1}\Psi_{m}^*\Psi_1\Psi_2^*\Psi_3\Psi_4^*\cdots\Psi_{m-2}\Psi_{m-1}^*,\;\;\;\;\;\;\;\;\;\;\;\;\;\;\;\;\;\;\;\;\;\;\;\;
\end{align}
\begin{align}
\nonumber
\Omega_j=\Psi_{2j-m-1}\Psi_{2j-m}^*
\cdots\Psi_{m-1}\Psi_m^*\Psi_1\Psi_2^*\Psi_3\Psi_4^*\cdots\Psi_m\Psi_1^*\cdots\Psi_{2j-m-3}\Psi_{2j-m-2}^*
\end{align}
for\; $\frac{m+3}{2}\le j \le m$.
\end{theorem}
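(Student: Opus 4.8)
The plan is to adapt the proof of \cite[Corollary 3.11]{BP21}, using Theorem~\ref{endlich}(i) in place of the scalar estimates used there. Set $\Psi:=\Psi_1^{(1/m)}\circ\cdots\circ\Psi_m^{(1/m)}$ and, with indices read modulo $m$, $B_i:=\Psi_i\Psi_{i+1}^*$ for $i=1,\dots,m$; each $B_i$ is again a bounded set of positive kernel operators on $L^2(X,\mu)$, since on $L^2$ the adjoint of a positive kernel operator is a positive kernel operator and a composition of such operators is again of this type. From $(A\circ C)^*=A^*\circ C^*$ and $(A^{(\alpha)})^*=(A^*)^{(\alpha)}$ for positive kernel operators one gets $\Psi^*=(\Psi_1^*)^{(1/m)}\circ\cdots\circ(\Psi_m^*)^{(1/m)}$, and since $\|A^*\|=\|A\|$ on $L^2$, applying \eqref{lema} with $\Psi$ replaced by $\Psi^*$ gives $\|\Psi\|=\|\Psi^*\|=r(\Psi\Psi^*)^{1/2}$ for $r\in\{\rho,\hat\rho\}$. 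It therefore suffices to estimate $r(\Psi\Psi^*)$.

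For the first inequality in \eqref{laufen}, I would rewrite (using commutativity of $\circ$) $\Psi^*=(\Psi_2^*)^{(1/m)}\circ(\Psi_3^*)^{(1/m)}\circ\cdots\circ(\Psi_1^*)^{(1/m)}$, a cyclic shift by one, and apply Theorem~\ref{endlich}(i) with $k=2$ and $\alpha_1=\cdots=\alpha_m=1/m$; its $j$-th column product is $\Psi_j\Psi_{j+1}^*=B_j$, whence $r(\Psi\Psi^*)\le r\big(B_1^{(1/m)}\circ\cdots\circ B_m^{(1/m)}\big)$. After reordering the Hadamard factors so that the odd-indexed $B_i$ come first (permissible as $\circ$ is commutative), the right-hand side is exactly $r(M)$, where $M$ denotes the weighted Hadamard geometric mean that appears inside the second member of \eqref{laufen}; thus $\|\Psi\|\le r(M)^{1/2}$.

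For the second inequality I would write $M=C_1^{(1/m)}\circ\cdots\circ C_m^{(1/m)}$ with $(C_1,\dots,C_m)=(B_1,B_3,\dots,B_m,B_2,B_4,\dots,B_{m-1})$, and rewrite the $i$-th of the $m$ copies of $M$ in $M^m$ (again by commutativity) as $C_i^{(1/m)}\circ C_{i+1}^{(1/m)}\circ\cdots\circ C_{i+m-1}^{(1/m)}$, indices modulo $m$. Theorem~\ref{endlich}(i) with $k=m$ then yields $r(M)^m=r(M^m)\le r\big(\Omega_1^{(1/m)}\circ\cdots\circ\Omega_m^{(1/m)}\big)$, once one verifies that the $\ell$-th column product $C_\ell C_{\ell+1}\cdots C_{\ell+m-1}$ equals, after expansion in terms of the $\Psi_i$, the operator $\Omega_\ell$ of the statement; taking $(2m)$-th roots gives $r(M)^{1/2}\le r\big(\Omega_1^{(1/m)}\circ\cdots\circ\Omega_m^{(1/m)}\big)^{1/(2m)}$.

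For the last inequality, each $\Omega_j$ is a cyclic rearrangement of $\Omega_1$ as a product of the sets $C_\ell$, so $r(\Omega_j)=r(\Omega_1)$ for all $j$ by $r(\Psi\Sigma)=r(\Sigma\Psi)$; Theorem~\ref{endlich}(i) with $k=1$ then gives $r\big(\Omega_1^{(1/m)}\circ\cdots\circ\Omega_m^{(1/m)}\big)\le\prod_{j=1}^{m}r(\Omega_j)^{1/m}=r(\Omega_1)$, and $\Omega_1=C_1\cdots C_m$ expands to $\Psi_1\Psi_2^*\cdots\Psi_{m-2}\Psi_{m-1}^*\Psi_m\Psi_1^*\cdots\Psi_{m-2}^*\Psi_{m-1}\Psi_m^*$, the right-most member of \eqref{laufen}. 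The one part that is not purely mechanical is the index bookkeeping in the last two paragraphs: checking that the cyclic shifts of the arrangement of the $B_i$ in which the odd-indexed ones precede the even-indexed ones reproduce precisely the operators $\Omega_1,\dots,\Omega_m$ of the statement (and that the middle member of \eqref{laufen} coincides with $M$). Everything else is a direct combination of \eqref{lema}, Theorem~\ref{endlich}(i), commutativity of $\circ$ and cyclicity of $r$.
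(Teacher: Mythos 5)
Your proposal is correct and follows essentially the same route as the paper: Lemma \ref{lema} combined with $\|\Psi\|=\|\Psi^*\|$, commutativity of the Hadamard product, and Theorem \ref{endlich}(i) applied first to $\Psi\Psi^*$ (giving the Hadamard mean of the sets $\Psi_i\Psi_{i+1}^*$) and then to the $m$ cyclic rearrangements of its $m$-th power, finishing with $r(\Omega_1)=\cdots=r(\Omega_m)$. The only difference is cosmetic (you reorder the Hadamard factors after applying Theorem \ref{endlich}(i) rather than before), and the index bookkeeping you flag does check out exactly as in the statement.
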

\begin{proof}
Let $r\in\{\rho, \hat{\rho}\}$. By Lemma \ref{lema} and commutativity of Hadamard product using 
Theorem \ref{endlich}(i) we have
\begin{align}
\nonumber
\|\Psi_1^{(\frac{1}{m})}\circ\cdots\circ\Psi_m^{(\frac{1}{m})}\|^2=r((\Psi_1^{(\frac{1}{m})}\circ\cdots\circ\Psi_m^{(\frac{1}{m})})(\Psi_1^{(\frac{1}{m})}\circ\cdots\circ\Psi_m^{(\frac{1}{m})})^*)
=\;\;\;\;\;\;\;\;\;\;\;\;\;\;\;\;\;\;\;\;\\
\nonumber
r(\!(\Psi_1^{(\frac{1}{m})}\circ\cdots\circ
\Psi_m^{(\frac{1}{m})}\circ\Psi_2^{(\frac{1}{m})}\circ\cdots\circ\Psi_{m-1}^{(\frac{1}{m})})((\Psi_2^*)^{(\frac{1}{m})}\circ\cdots\circ
(\Psi_1^*)^{(\frac{1}{m})}\circ
\cdots\circ(\Psi_{m}^*)^{(\frac{1}{m})})\!)\\
\nonumber
\le r((\Psi_1\Psi_2^*)^{(\frac{1}{m})}
\circ\cdots\circ(\Psi_{m-2}\Psi_{m-1}^*)^{(\frac{1}{m})}\circ(\Psi_m\Psi_1^*)^{(\frac{1}{m})}\circ\cdots\circ(\Psi_{m-1}\Psi_m^*)^{(\frac{1}{m})})\;\;\;\;\;\;\;\;\;\;\;\;
\end{align}
Notice that
\begin{align}
\nonumber
((\Psi_1\Psi_2^*)^{(\frac{1}{m})}
\circ\cdots\circ(\Psi_{m-2}\Psi_{m-1}^*)^{(\frac{1}{m})}\circ(\Psi_m\Psi_1^*)^{(\frac{1}{m})}\circ\cdots\circ(\Psi_{m-1}\Psi_m^*)^{(\frac{1}{m})})^{m}=\;\;\;\;\;\;\;\;\;\;\;\\
\nonumber
((\Psi_1\Psi_2^*)^{(\frac{1}{m})}
\circ\cdots\circ(\Psi_{m-2}\Psi_{m-1}^*)^{(\frac{1}{m})}\circ(\Psi_m\Psi_1^*)^{(\frac{1}{m})}\circ\cdots\circ(\Psi_{m-1}\Psi_m^*)^{(\frac{1}{m})})\times\;\;\;\;\;\;\;\;\;\;\;\;\;\;\\
\nonumber
((\Psi_3\Psi_4^*)^{(\frac{1}{m})}
\circ\cdots\circ(\Psi_{m}\Psi_{1}^*)^{(\frac{1}{m})}\circ(\Psi_2\Psi_3^*)^{(\frac{1}{m})}\circ\cdots\circ(\Psi_{1}\Psi_2^*)^{(\frac{1}{m})})\cdots\;\;\;\;\;\;\;\;\;\;\;\;\;\;\;\;\;\;\;\;\;\;\;\;\;\;\\
\nonumber
((\Psi_{m-1}\Psi_m^*)^{(\frac{1}{m})}\circ(\Psi_1\Psi_2^*)^{(\frac{1}{m})}\circ\cdots\circ(\Psi_2\Psi_3^*)^{(\frac{1}{m})}\circ\cdots\circ(\Psi_{m-3}\Psi_{m-2}^*)^{(\frac{1}{m})})\;\;\;\;\;\;\;\;\;\;\;\;\;\;\;\;\;\;	
\end{align}
It follows that by 
Theorem \ref{endlich}(i)
\begin{align}
\nonumber
r((\Psi_1\Psi_2^*)^{(\frac{1}{m})}
\circ\cdots\circ(\Psi_{m-2}\Psi_{m-1}^*)^{(\frac{1}{m})}\circ(\Psi_m\Psi_1^*)^{(\frac{1}{m})}\circ\cdots\circ(\Psi_{m-1}\Psi_m^*)^{(\frac{1}{m})})\le\;\;\;\;\;\;\;\;\;\;\;\\
\nonumber
r(\Omega_1^{(\frac{1}{m})}\circ\cdots\circ\Omega_m^{(\frac{1}{m})})^{\frac{1}{m}}\le (r(\Omega_1)\cdots r(\Omega_m))^{\frac{1}{m^2}}=\;\;\;\;\;\;\;\;\;\;\;\;\;\;\;\;\;\;\;\;\;\;\;\;\;\;\;\;\;\;\;\;\;\;\;\;\;\;\;\;\;\;\;\;\;\;\;\;\;\;\;\\
\nonumber
r(\Psi_1\Psi_2^*\Psi_3\Psi_4^*
\cdots\Psi_{m-2}\Psi_{m-1}^*\Psi_m\Psi_1^*\Psi_2\Psi_3^*
\cdots\Psi_{m-1}\Psi_m^*)^{\frac{1}{m}}\;\;\;\;\;\;\;\;\;\;\;\;\;\;\;\;\;\;\;\;\;\;\;\;\;\;\;\;\;\;\;\;\;\;\;\;\;\;\;\;
\end{align}
since $r(\Omega_1)=\ldots = r(\Omega_m)$, which completes the proof.
\end{proof}
The proof of the following result goes similarly by applying 
Theorem \ref {endlich}(ii) instead of 
Theorem \ref{endlich}(i). It refines the inequality \eqref{ausflug} in a different way as  in Theorem \ref{thkate}.
\begin{theorem}
\label{kate}
Let $m\in\NN$ be odd and let $\Psi_1, \ldots , \Psi_m$ be bounded sets of nonnegative matrices that define operators on $l^2(R).$ If $\alpha\ge\frac{1}{m}$ and\; $\Omega_1, \ldots , \Omega_m$ are sets defined in Theorem \ref{kety}, then for $r\in\{\rho, \hat{\rho}\}$
\begin{align}
\nonumber
\|\Psi_1^{(\alpha)}\circ\Psi_2^{(\alpha)}
\circ\cdots\circ\Psi_m^{(\alpha)}\|\le\;\;\;\;\;\;\;\;\;\;\;\;\;\;\;\;\;\;\;\;\;\;\;\;\;\;\;\;\;\;\;\;\;\;\;\;\;\;\;\;\;\;\;\;\;\;\;\;\;\;\;\;\;\;\;\;\;\;\;\;\;\;\;\;\;\;\;\;\;\;\;\;\;\;\;\;\;\;\;\;\;\;\\
\nonumber
r((\Psi_1\Psi_2^*)^{(\alpha)}\circ\cdots\circ(\Psi_{m-2}\Psi_{m-1}^*)^{(\alpha)}\circ(\Psi_m\Psi_1^*)^{(\alpha)}
\circ\cdots\circ(\Psi_{m-1}\Psi_m^*)^{(\alpha)})^{\frac{1}{2}}\le\;\;\;\;\;\;\;\;\;\;\;\;\;\;\\
r(\Omega_1^{(\alpha)}\circ\cdots\circ\Omega_m^{(\alpha)})^{\frac{1}{2m}}\le r(\Psi_1\Psi_2^*\Psi_3\Psi_4^*\cdots\Psi_{m-2}\Psi_{m-1}^*\Psi_m\Psi_1^*\Psi_2\Psi_3^*\cdots\Psi_{m-1}\Psi_m^*)^{\frac{\alpha}{2}}\;\;\;\;\;
\label{urlaub}
\end{align}
\end{theorem}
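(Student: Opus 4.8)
The plan is to follow the proof of Theorem~\ref{kety} essentially verbatim, replacing the exponent $\tfrac1m$ by $\alpha$ and each use of Theorem~\ref{endlich}(i) by Theorem~\ref{endlich}(ii); the hypothesis $\alpha\ge\tfrac1m$ is precisely what makes this legitimate, since every Hadamard weighted geometric mean that occurs has $m$ factors of equal weight $\alpha$, hence weight-sum $m\alpha\ge1$, so Theorem~\ref{endlich}(ii) applies and all the Hadamard powers and means in sight define uniformly bounded positive operators on $l^2(R)$. First I would record the preliminaries: $l^2(R)$ is $L^2$ of a counting measure, so it lies in $\mathcal L$ and both it and its dual carry order-continuous norms, whence Lemma~\ref{lema} and the scalar identity $\|A\|^2=\rho(AA^*)$ are available. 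Put $\Theta:=\Psi_1^{(\alpha)}\circ\cdots\circ\Psi_m^{(\alpha)}$; then $\Theta^*=(\Psi_1^*)^{(\alpha)}\circ\cdots\circ(\Psi_m^*)^{(\alpha)}$ and $\|\Theta\|=\|\Theta^*\|$, so Lemma~\ref{lema} gives $\|\Theta\|^2=r(\Theta\Theta^*)$ for $r\in\{\rho,\hat\rho\}$.

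To obtain the first inequality in \eqref{urlaub} I would use commutativity of the Hadamard product to list the $m$ factors of $\Theta$ and of $\Theta^*$ in the cyclic order already used for Theorem~\ref{kety} (this ordering works because $m$ is odd), so that multiplying the $j$-th factor of $\Theta$ by the $j$-th factor of $\Theta^*$ produces, in order, the sets $\Psi_1\Psi_2^*,\Psi_3\Psi_4^*,\dots,\Psi_{m-2}\Psi_{m-1}^*,\Psi_m\Psi_1^*,\Psi_2\Psi_3^*,\dots,\Psi_{m-1}\Psi_m^*$. Then $\Theta\Theta^*$ is a product of two Hadamard weighted geometric means of $m$ sets each of weight $\alpha$, and Theorem~\ref{endlich}(ii) (with $k=2$, $n=1$, weight-sum $m\alpha\ge1$) yields $\|\Theta\|^2=r(\Theta\Theta^*)\le r(\Xi)$, where $\Xi:=(\Psi_1\Psi_2^*)^{(\alpha)}\circ\cdots\circ(\Psi_{m-2}\Psi_{m-1}^*)^{(\alpha)}\circ(\Psi_m\Psi_1^*)^{(\alpha)}\circ\cdots\circ(\Psi_{m-1}\Psi_m^*)^{(\alpha)}$. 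Taking square roots gives the first inequality.

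For the remaining two inequalities I would raise $\Xi$ to the power $m$ and, again by commutativity of $\circ$, write $\Xi^m$ as a product of $m$ Hadamard weighted geometric means, the $l$-th factor being the cyclic rearrangement of $\Xi$ whose collected $j$-th factors rewrite as the set $\Omega_j$ from Theorem~\ref{kety}. A second application of Theorem~\ref{endlich}(ii), now with $k=m$ and weight-sum $m\alpha\ge1$, gives
$$r(\Xi)^m=r(\Xi^m)\le r\bigl(\Omega_1^{(\alpha)}\circ\cdots\circ\Omega_m^{(\alpha)}\bigr)\le r(\Omega_1)^{\alpha}\cdots r(\Omega_m)^{\alpha}.$$
Each $\Omega_j$ is a cyclic permutation of the single product $P:=\Psi_1\Psi_2^*\Psi_3\Psi_4^*\cdots\Psi_{m-2}\Psi_{m-1}^*\Psi_m\Psi_1^*\Psi_2\Psi_3^*\cdots\Psi_{m-1}\Psi_m^*$, so $r(\Omega_1)=\cdots=r(\Omega_m)=r(P)$ by $r(\Psi\Sigma)=r(\Sigma\Psi)$; hence $r(\Xi)\le r\bigl(\Omega_1^{(\alpha)}\circ\cdots\circ\Omega_m^{(\alpha)}\bigr)^{1/m}\le r(P)^{\alpha}$. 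Combining with the first step and taking square roots yields $\|\Theta\|\le r(\Xi)^{1/2}\le r\bigl(\Omega_1^{(\alpha)}\circ\cdots\circ\Omega_m^{(\alpha)}\bigr)^{1/(2m)}\le r(P)^{\alpha/2}$, which is \eqref{urlaub}.

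No new idea is needed beyond Theorem~\ref{kety}; the genuinely delicate part is the combinatorial bookkeeping that identifies the cyclic rearrangements of the Hadamard factors of $\Theta\Theta^*$ and of $\Xi^m$ with the pair-products $\Psi_i\Psi_{i+1}^*$ and the sets $\Omega_j$ exactly as defined in Theorem~\ref{kety}. I expect this to be the main (and essentially presentational) obstacle, since it requires reproducing the long multi-line expansions accurately; crucially, that bookkeeping does not involve the value of $\alpha$, so it transfers unchanged from the case $\alpha=\tfrac1m$, and the only new point---that $m\alpha\ge1$ makes Theorem~\ref{endlich}(ii) and the well-definedness of all the operators available at both steps---is immediate from the hypothesis.
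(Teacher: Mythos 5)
Your proposal is correct and follows essentially the same route as the paper, which proves Theorem \ref{kate} precisely by repeating the argument of Theorem \ref{kety} with Theorem \ref{endlich}(ii) in place of Theorem \ref{endlich}(i), the hypothesis $\alpha\ge\frac{1}{m}$ guaranteeing the weight-sum condition $m\alpha\ge 1$. Your write-up in fact supplies more detail than the paper's one-line proof, and the bookkeeping with $\Xi$, the cyclic rearrangements, and $r(\Omega_1)=\cdots=r(\Omega_m)$ matches the intended argument exactly.
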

The following corollary extends the result \cite[Corollary 3.12]{BP21}.
\begin{corollary}
\noindent{\rm (i)} Let $\Psi$ and $\Sigma$ be bounded sets of positive kernel operators on $L^2(X, \mu)$, $r\in\{\rho, \hat{\rho}\}$. Then
\begin{align}
\nonumber
\|\Psi^{(\frac{1}{3})}\circ(\Sigma^*)^{(\frac{1}{3})}\circ\Psi^{(\frac{1}{3})}\|
\le r^{\frac{1}{2}}((\Psi^*\Sigma^*)^{(\frac{1}{3})}\circ(\Psi^*\Psi)^{(\frac{1}{3})}\circ(\Sigma\Psi)^{(\frac{1}{3})})\le\;\;\;\;\;\;\;\;\;\;\;\;\;\;\;\;\;\;\;\;\;\;\;\;\;\;\\
r^{\frac{1}{6}}((\Psi^*\Sigma^*\Psi^*\Psi\Sigma\Psi)^{(\frac{1}{3})}\circ(\Psi^*\Psi\Sigma\Psi\Psi^*\Sigma^*)^{(\frac{1}{3})}\circ(\Sigma\Psi\Psi^*\Sigma^*\Psi^*\Psi)^{(\frac{1}{3})})\le\|\Psi\Sigma\Psi\|^{\frac{1}{3}}\;\;\;\;\;\;\;\;\;
\label{henne}
\end{align}
\noindent{\rm (ii)} If $\Psi$ and $\Sigma$ are bounded sets of nonnegative matrices that define operators on $l^2(R)$ and if $\alpha\ge\frac{1}{3}$ then
\begin{align}
\nonumber
\|\Psi^{(\alpha)}\circ(\Sigma^*)^{(\alpha)}\circ\Psi^{(\alpha)}\|\le r^{\frac{1}{2}}((\Psi^*\Sigma^*)^{(\alpha)}\circ(\Psi^*\Psi)^{(\alpha)}\circ(\Sigma\Psi)^{(\alpha)})\le\;\;\;\;\;\;\;\;\;\;\;\;\;\;\;\;\;\;\;\;\;\;\;\;\;\;\;\;\;\;\;\;\;\;\;\;\;\;\;\;\;\;\;\;\;\;\;\;\;\;\;\;\;\;\;\;\;\;\;\\
r^{\frac{1}{6}}((\Psi^*\Sigma^*\Psi^*\Psi\Sigma\Psi)^{(\alpha)}\circ(\Psi^*\Psi\Sigma\Psi\Psi^*\Sigma^*)^{(\alpha)}\circ(\Sigma\Psi\Psi^*\Sigma^*\Psi^*\Psi)^{(\alpha)})\le\|\Psi\Sigma\Psi\|^{\alpha}\;\;\;\;\;\;\;\;\;\;\;\;\;\;\;\;\;\;\;\;\;\;\;\;\;\;\;\;\;\;\;\;\;\;\;\;\;\;\;\;\;\;
\label{huhn}
\end{align}
\end{corollary}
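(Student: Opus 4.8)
The plan is to prove (i) and (ii) by one and the same argument, writing $\beta := \tfrac13$ in case (i) and $\beta := \alpha\ge\tfrac13$ in case (ii); the only structural difference is that case (i) invokes Theorem \ref{endlich}(i) (Hadamard weights summing to $1$) while case (ii) invokes Theorem \ref{endlich}(ii) (weights summing to $3\beta=3\alpha\ge1$, with $L=l^2(R)\in\mathcal L$). In effect the statement is the $m=3$ instance of Theorem \ref{kety} (resp.\ Theorem \ref{kate}) with $\Psi_1=\Psi$, $\Psi_2=\Sigma^*$, $\Psi_3=\Psi$, after using $(\Sigma^*)^*=\Sigma$ and relabelling the sets $\Omega_j$ cyclically; rather than chasing that index matching I would write out the short self-contained proof, which has three links exactly matching the three inequalities in \eqref{henne} (resp.\ \eqref{huhn}).

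Put $\mathcal B:=\Psi^{(\beta)}\circ(\Sigma^*)^{(\beta)}\circ\Psi^{(\beta)}$. First I would note that $\mathcal B$ is a bounded set of positive kernel operators on $L^2(X,\mu)$ (in case (ii), that it defines operators on $l^2(R)$ is precisely where $3\beta\ge1$ enters, via the matrix part of Theorem \ref{thanfang}). By Lemma \ref{lema}, $\|\mathcal B\|^2=r(\mathcal B^*\mathcal B)$ for $r\in\{\rho,\hat{\rho}\}$. Since the adjoint of a weighted Hadamard geometric mean is the weighted Hadamard geometric mean of the adjoints and $(\Sigma^*)^*=\Sigma$, we have $\mathcal B^*=(\Psi^*)^{(\beta)}\circ\Sigma^{(\beta)}\circ(\Psi^*)^{(\beta)}$. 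Writing (by commutativity of $\circ$) $\mathcal B^*$ as $(\Psi^*)^{(\beta)}\circ(\Psi^*)^{(\beta)}\circ\Sigma^{(\beta)}$ and $\mathcal B$ as $(\Sigma^*)^{(\beta)}\circ\Psi^{(\beta)}\circ\Psi^{(\beta)}$, Theorem \ref{endlich} applied to the product of these two weighted Hadamard geometric means gives $r(\mathcal B^*\mathcal B)\le r\bigl((\Psi^*\Sigma^*)^{(\beta)}\circ(\Psi^*\Psi)^{(\beta)}\circ(\Sigma\Psi)^{(\beta)}\bigr)$, which together with $\|\mathcal B\|^2=r(\mathcal B^*\mathcal B)$ yields the first inequality.

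Next, set $E:=(\Psi^*\Sigma^*)^{(\beta)}\circ(\Psi^*\Psi)^{(\beta)}\circ(\Sigma\Psi)^{(\beta)}$ and bound $r(E)=r(E^3)^{1/3}$. Expand $E^3=E\,E\,E$ and, using commutativity of $\circ$, reorder the three copies of $E$ so that the three columns of the resulting $3\times3$ array of operator sets read, from top to bottom, $(\Psi^*\Sigma^*,\Psi^*\Psi,\Sigma\Psi)$, then $(\Psi^*\Psi,\Sigma\Psi,\Psi^*\Sigma^*)$, then $(\Sigma\Psi,\Psi^*\Sigma^*,\Psi^*\Psi)$; each row is then a permutation of $\{\Psi^*\Sigma^*,\Psi^*\Psi,\Sigma\Psi\}$ (so each factor is still $E$), while the column products are exactly $\Omega_1=\Psi^*\Sigma^*\Psi^*\Psi\Sigma\Psi$, $\Omega_2=\Psi^*\Psi\Sigma\Psi\Psi^*\Sigma^*$, $\Omega_3=\Sigma\Psi\Psi^*\Sigma^*\Psi^*\Psi$. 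Theorem \ref{endlich} then gives $r(E^3)\le r(\Omega_1^{(\beta)}\circ\Omega_2^{(\beta)}\circ\Omega_3^{(\beta)})\le r(\Omega_1)^{\beta}r(\Omega_2)^{\beta}r(\Omega_3)^{\beta}$, hence $r^{1/2}(E)\le r^{1/6}(\Omega_1^{(\beta)}\circ\Omega_2^{(\beta)}\circ\Omega_3^{(\beta)})$, the middle inequality. Finally $\Omega_1,\Omega_2,\Omega_3$ are cyclic permutations of one another, so $r(\Omega_1)=r(\Omega_2)=r(\Omega_3)$ by $r(\Phi\Theta)=r(\Theta\Phi)$, and $\Omega_1=(\Psi\Sigma\Psi)^*(\Psi\Sigma\Psi)$, so $r(\Omega_1)=\|\Psi\Sigma\Psi\|^2$ by Lemma \ref{lema}; therefore $r^{1/6}(\Omega_1^{(\beta)}\circ\Omega_2^{(\beta)}\circ\Omega_3^{(\beta)})\le r(\Omega_1)^{3\beta/6}=\|\Psi\Sigma\Psi\|^{\beta}$, which is $\|\Psi\Sigma\Psi\|^{1/3}$ in case (i) and $\|\Psi\Sigma\Psi\|^{\alpha}$ in case (ii).

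The routine parts are the kernel-level verifications that $\mathcal B^*$ has the stated form and that $\Omega_1=(\Psi\Sigma\Psi)^*(\Psi\Sigma\Psi)$ as sets of operators. The one step that needs care is the cyclic bookkeeping in the previous paragraph: one must choose the permutations of the three copies of $E$ so that simultaneously every row is a permutation of $\{\Psi^*\Sigma^*,\Psi^*\Psi,\Sigma\Psi\}$ (so Theorem \ref{endlich} applies) and the three column products are exactly the sets $\Omega_1,\Omega_2,\Omega_3$ written in the statement — a short finite check, but the point at which a careless choice would deliver a different, weaker inequality.
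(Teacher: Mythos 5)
Your argument is correct and is essentially the paper's: the paper deduces \eqref{henne} and \eqref{huhn} by citing Theorems \ref{kety} and \ref{kate} with $m=3$ (taking $\Psi_1=\Psi^*$, $\Psi_2=\Sigma$, $\Psi_3=\Psi^*$ and using $\|\Psi\|=\|\Psi^*\|$, Lemma \ref{lema} and Theorem \ref{endlich}), while you simply inline the $m=3$ instance of that same proof — Lemma \ref{lema}, commutativity of the Hadamard product, and Theorem \ref{endlich}(i)/(ii) applied to $\mathcal B^*\mathcal B$ and to $E^3$, with the cyclic identification $r(\Omega_1)=r(\Omega_2)=r(\Omega_3)=\|\Psi\Sigma\Psi\|^2$. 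Your bookkeeping (column products $\Omega_1,\Omega_2,\Omega_3$ and the use of $3\alpha\ge 1$ in case (ii)) checks out, so no gap.
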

\begin{proof}
The inequalities \eqref{henne} and \eqref{huhn} follow from Theorem \ref{kety} and Theorem \ref{kate} respectively by taking $\Psi_1=\Psi^*$, $\Psi_2=\Sigma$ and $\Psi_3=\Psi^*$, using the fact that $\|\Psi\|=\|\Psi^*\|$, applying 
Theorem \ref{endlich}(i) and 
Theorem \ref{endlich}(ii), respectively, as well as Lemma \ref{lema}. 
\end{proof}
The following result and its proof extend  \cite[Theorem 3.5]{BP21}.
\begin{theorem}
\label{ferkel}
 Let m be even, $\tau, \nu\in S_m$, 
  and let $\Psi_1, \ldots , \Psi_m$ be bounded sets of positive kernel operators on $L^2(X, \mu).$ Denote $\Sigma_j=\Psi_{\tau(2j-1)}^*\Psi_{\tau(2j)}$ and $\Sigma_{\frac{m}{2}+j}=\Psi_{\tau(2j)}^*\Psi_{\tau(2j-1)}=\Sigma_{j}^*$ for $j=1, \ldots , \frac{m}{2}$. Let $\Omega_i=\Sigma_{\nu(i)}\cdots\Sigma_{\nu(m)}\Sigma_{\nu(1)}\cdots\Sigma_{\nu(i-1)}$ for $i=1, \ldots , m$ and $r\in\{\rho, \hat{\rho}\}$
  
\noindent{\rm (i)} Then
\begin{align}
\nonumber
\|\Psi_1^{(\frac{1}{m})}\circ\cdots\circ\Psi_m^{(\frac{1}{m})}\|\le r^{\frac{1}{2}}(\Sigma_1^{(\frac{1}{m})}\circ\cdots\circ\Sigma_m^{(\frac{1}{m})})\;\;\;\;\;\;\;\;\;\;\;\;\;\\
\le r^{\frac{1}{2m}}(\Omega_1^{(\frac{1}{m})}\circ\cdots\circ\Omega_m^{(\frac{1}{m})})\le r^{\frac{1}{2m}}(\Sigma_{\nu(1)}\cdots\Sigma_{\nu(m)})
\label{sau}
\end{align}
\noindent{\rm (ii)} Let $\Psi_1, \ldots , \Psi_m$ be bounded sets of nonnegative matrices that define operators on $l^2(R)$. If $\alpha\ge\frac{1}{m}$ then
\begin{align}
\nonumber
\|\Psi_1^{(\alpha)}\circ\cdots\circ\Psi_m^{(\alpha)}\|\le r^{\frac{1}{2}}(\Sigma_1^{(\alpha)}\circ\cdots\circ\Sigma_m^{(\alpha)})\;\;\;\;\;\;\;\;\;\;\;\;\;\;\\
\le r^{\frac{1}{2m}}(\Omega_1^{(\alpha)}\circ\cdots\circ\Omega_m^{(\alpha)})\le r(\Sigma_{\nu(1)}\cdots\Sigma_{\nu(m)})^{\frac{\alpha}{2}}
\label{schwein}
\end{align}
\end{theorem}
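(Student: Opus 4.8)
The plan is to follow the template of the proofs of Theorems \ref{kety} and \ref{kate}: reduce the operator norm to a spectral radius via Lemma \ref{lema}, then push the estimate through Theorem \ref{endlich}(i) (for part (i)) or Theorem \ref{endlich}(ii) (for part (ii)), exploiting commutativity of the Hadamard product to arrange the factors conveniently. Write $\Phi=\Psi_1^{(1/m)}\circ\cdots\circ\Psi_m^{(1/m)}$. Since $m$ is even, commutativity lets me reorder the factors of $\Phi^*$ as $(\Psi_{\tau(1)}^*)^{(1/m)}\circ(\Psi_{\tau(2)}^*)^{(1/m)}\circ\cdots\circ(\Psi_{\tau(m)}^*)^{(1/m)}$ and the factors of $\Phi$ as $\Psi_{\tau(2)}^{(1/m)}\circ\Psi_{\tau(1)}^{(1/m)}\circ\Psi_{\tau(4)}^{(1/m)}\circ\Psi_{\tau(3)}^{(1/m)}\circ\cdots\circ\Psi_{\tau(m)}^{(1/m)}\circ\Psi_{\tau(m-1)}^{(1/m)}$, so that in the product $\Phi^*\Phi$, viewed as a product of $k=2$ Hadamard products in the sense of Theorem \ref{endlich}, the $\ell$-th ``column product'' is $\Psi_{\tau(2j-1)}^*\Psi_{\tau(2j)}=\Sigma_j$ when $\ell=2j-1$ and $\Psi_{\tau(2j)}^*\Psi_{\tau(2j-1)}=\Sigma_{m/2+j}$ when $\ell=2j$. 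By Lemma \ref{lema}, $\|\Phi\|^2=r(\Phi^*\Phi)$, and Theorem \ref{endlich}(i) (all exponents equal to $1/m$, summing to $1$) together with commutativity gives $r(\Phi^*\Phi)\le r(\Sigma_1^{(1/m)}\circ\cdots\circ\Sigma_m^{(1/m)})$; taking square roots yields the first inequality of \eqref{sau}.

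For the remaining two inequalities, put $T=\Sigma_1^{(1/m)}\circ\cdots\circ\Sigma_m^{(1/m)}=\Sigma_{\nu(1)}^{(1/m)}\circ\cdots\circ\Sigma_{\nu(m)}^{(1/m)}$, again by commutativity. Expand $T^m$ as a product of $m$ copies of $T$, writing the $i$-th copy in the cyclically rotated order $\Sigma_{\nu(i)}^{(1/m)}\circ\Sigma_{\nu(i+1)}^{(1/m)}\circ\cdots\circ\Sigma_{\nu(i-1)}^{(1/m)}$ (indices mod $m$); each copy still equals $T$. Applying Theorem \ref{endlich}(i) with $k=m$, the $\ell$-th column product is exactly $\Omega_\ell=\Sigma_{\nu(\ell)}\Sigma_{\nu(\ell+1)}\cdots\Sigma_{\nu(\ell-1)}$, so the chain in that theorem gives $r(T)^m=r(T^m)\le r(\Omega_1^{(1/m)}\circ\cdots\circ\Omega_m^{(1/m)})\le\bigl(\prod_{\ell=1}^m r(\Omega_\ell)\bigr)^{1/m}$. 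Since each $\Omega_\ell$ is a cyclic permutation of $\Sigma_{\nu(1)}\cdots\Sigma_{\nu(m)}$, the identity $r(\Psi\Sigma)=r(\Sigma\Psi)$ forces $r(\Omega_1)=\cdots=r(\Omega_m)=r(\Sigma_{\nu(1)}\cdots\Sigma_{\nu(m)})$, hence $r(T)^m\le r(\Omega_1^{(1/m)}\circ\cdots\circ\Omega_m^{(1/m)})\le r(\Sigma_{\nu(1)}\cdots\Sigma_{\nu(m)})$. Raising these to the power $1/(2m)$ and using $r^{1/2}(T)=(r(T)^m)^{1/(2m)}$ produces the second and third inequalities of \eqref{sau}.

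For part (ii) the argument is word for word the same, carried out in $l^2(R)$ with all Hadamard exponents equal to $\alpha$. The operator $\Psi_1^{(\alpha)}\circ\cdots\circ\Psi_m^{(\alpha)}$ and all the intermediate Hadamard products are well defined on $l^2(R)$ because $m\alpha\ge1$, and this same condition is exactly what is needed to invoke Theorem \ref{endlich}(ii) at each step (for $\Phi^*\Phi$ as a product of $k=2$ Hadamard products, and for $T^m$ as a product of $k=m$ of them). The only change is in the final estimate: Theorem \ref{endlich}(ii) yields $r(\Omega_1^{(\alpha)}\circ\cdots\circ\Omega_m^{(\alpha)})\le\prod_{\ell=1}^m r(\Omega_\ell)^{\alpha}=r(\Sigma_{\nu(1)}\cdots\Sigma_{\nu(m)})^{m\alpha}$, and raising to the power $1/(2m)$ gives precisely the exponent $\alpha/2$ in \eqref{schwein}.

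The steps requiring care are purely combinatorial: verifying that, with the chosen reorderings, the column products coming out of Theorem \ref{endlich} are exactly the $\Sigma_j$ (when $k=2$) and the $\Omega_\ell$ (when $k=m$), and confirming the cyclic-invariance equalities $r(\Omega_1)=\cdots=r(\Omega_m)$. Both are immediate once the orderings are written out explicitly, and there is no analytic difficulty beyond what Lemma \ref{lema} and Theorem \ref{endlich} already supply.
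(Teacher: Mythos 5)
Your proposal is correct and follows essentially the same route as the paper: Lemma \ref{lema} combined with commutativity of the Hadamard product to identify the column products of $\Phi^*\Phi$ with the $\Sigma_j$, then Theorem \ref{endlich}(i) (resp.\ (ii)) applied to the cyclically rotated factorization of $T^m$ to produce the $\Omega_\ell$, finishing with $r(\Omega_1)=\cdots=r(\Omega_m)=r(\Sigma_{\nu(1)}\cdots\Sigma_{\nu(m)})$. The only difference is your interleaved ordering of the factors in the first step versus the paper's blocked ordering, which is immaterial by commutativity.
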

\begin{proof}
By Lemma \ref{lema} and using commutativity of Hadamard product we have
\begin{align}
\nonumber
\|\Psi_1^{(\frac{1}{m})}\circ\cdots\circ\Psi_m^{(\frac{1}{m})}\|=r((\Psi_1^{(\frac{1}{m})}\circ\cdots\circ\Psi_m^{(\frac{1}{m})})^*(\Psi_1^{(\frac{1}{m})}\circ\cdots\circ\Psi_m^{(\frac{1}{m})}))^{\frac{1}{2}}\;\;\;\;\\
\nonumber
=r(((\Psi_{\tau(1)}^*)^{(\frac{1}{m})}\circ\cdots\circ(\Psi_{\tau(m-1)}^*)^{(\frac{1}{m})}\circ(\Psi_{\tau(2)}^*)^{(\frac{1}{m})}\circ\cdots\circ(\Psi_{\tau(m)}^*)^{(\frac{1}{m})})\\
\nonumber
\cdot(\Psi_{\tau(2)}^{(\frac{1}{m})}\circ\cdots\circ\Psi_{\tau(m)}^{(\frac{1}{m})}\circ\Psi_{\tau(1)}^{(\frac{1}{m})}\circ\cdots\circ\Psi_{\tau(m-1)}^{(\frac{1}{m})}))^{\frac{1}{2}}\;\;\;\;\;\;\;\;\;\;\;\;\;\;\;\;\;\;\;\;\;\;\;\;\;\;\;\;\;\;\\
\nonumber
\le r((\Psi_{\tau(1)}^*\Psi_{\tau(2)})^{(\frac{1}{m})}\circ\cdots\circ(\Psi_{\tau(m-1)}^*\Psi_{\tau(m)})^{(\frac{1}{m})}\circ(\Psi_{\tau(2)}^*\Psi_{\tau(1)})^{(\frac{1}{m})}\;\;\;\;\;\;\\
\nonumber
\circ\cdots\circ(\Psi_{\tau(m)}^*\Psi_{\tau(m-1)})^{(\frac{1}{m})})^{\frac{1}{2}}\;\;\;\;\;\;\;\;\;\;\;\;\;\;\;\;\;\;\;\;\;\;\;\;\;\;\;\;\;\;\;\;\;\;\;\;\;\;\;\;\;\;\;\;\;\;\;\;\;\;\;\;\;\;\;\;\\
\nonumber
=r(\Sigma_1^{(\frac{1}{m})}\circ\cdots\circ\Sigma_m^{(\frac{1}{m})})^{\frac{1}{2}}=r(\Sigma_{\nu(1)}^{(\frac{1}{m})}\circ\cdots\circ\Sigma_{\nu(m)}^{(\frac{1}{m})})^{\frac{1}{2}},\;\;\;\;\;\;\;\;\;\;\;\;\;\;\;\;\;\;\;\;\;\;\;\;\;\;\;
\end{align}
which proves the first inequality in \eqref{sau}. To prove the second and the third inequality in \eqref{sau} observe that
\begin{align}
\nonumber
(\Sigma_{\nu(1)}^{(\frac{1}{m})}\circ\cdots\circ\Sigma_{\nu(m)}^{(\frac{1}{m})})^m=(\Sigma_{\nu(1)}^{(\frac{1}{m})}\circ\Sigma_{\nu(2)}^{(\frac{1}{m})}\circ\cdots\circ\Sigma_{\nu(m)}^{(\frac{1}{m})})(\Sigma_{\nu(2)}^{(\frac{1}{m})}\circ(\Sigma_{\nu(3)}^{(\frac{1}{m})}\circ\cdots\circ\Sigma_{\nu(1)}^{(\frac{1}{m})})\\
\nonumber
\cdots(\Sigma_{\nu(m)}^{(\frac{1}{m})}\circ\Sigma_{\nu(1)}^{(\frac{1}{m})}\circ\cdots\circ\Sigma_{\nu(m-1)}^{(\frac{1}{m})})\;\;\;\;\;\;\;\;\;\;\;\;\;\;\;\;\;\;\;\;\;\;\;\;\;\;\;\;\;\;\;
\end{align}
and
\begin{align}
\nonumber
r(\Sigma_{\nu(1)}^{(\frac{1}{m})}\circ\cdots\circ\Sigma_{\nu(m)}^{(\frac{1}{m})})\le r((\Sigma_{\nu(1)}\Sigma_{\nu(2)}\cdots\Sigma_{\nu(m)})^{(\frac{1}{m})}\circ(\Sigma_{\nu(2)}\Sigma_{\nu(3)}\cdots\Sigma_{\nu(1)})^{(\frac{1}{m})}\\
\nonumber
\circ\cdots\circ(\Sigma_{\nu(m)}\Sigma_{\nu(1)}\cdots\Sigma_{\nu(m-1)})^{(\frac{1}{m})})^{\frac{1}{m}}=r(\Omega_1^{(\frac{1}{m})}\circ\cdots\circ\Omega_m^{(\frac{1}{m})})^{\frac{1}{m}}
\end{align}
since $r(\Sigma^m)=r(\Sigma)^m$ and by 
Theorem \ref{endlich}(i). It follows that
\begin{align}
\nonumber
r(\Sigma_{\nu(1)}^{(\frac{1}{m})}\circ\cdots\circ\Sigma_{\nu(m)}^{(\frac{1}{m})})^{\frac{1}{2}}\le r(\Omega_1^{(\frac{1}{m})}\circ\cdots\circ\Omega_m^{(\frac{1}{m})})^{\frac{1}{2m}}\le (r(\Omega_1)\cdots r(\Omega_m))^{\frac{1}{2m^2}}\\
\nonumber
=r(\Sigma_{\nu(1)}\cdots\Sigma_{\nu(m)})^{\frac{1}{2m}}\;\;\;\;\;\;\;\;\;\;\;\;\;\;\;\;\;\;\;\;\;\;\;\;\;\;\;\;\;\;\;\;\;\;\;\;\;\;\;
\end{align}
according to 
Theorem \ref{endlich}(i) and using the fact that $r(\Omega_1)= \ldots =r(\Omega_m)\!=r(\Sigma_{\nu(1)}\cdots\Sigma_{\nu(m)})$ which completes the proof of \eqref{sau}. Inequalities \eqref{schwein} are proved in a similar way by applying 
Theorem \ref{endlich}(ii).
\end{proof}
\begin{remark}\label{kbgd}
In the special case, taking permutation $\nu$ such that $\nu(j)=j$ for $j=1, \ldots , \frac{m}{2}$ and $\nu(\frac{m}{2}+j)=m-j+1$ for $j=1, \ldots , \frac{m}{2}$  in Theorem \ref{ferkel} we obtain $r^{\frac{1}{2}}(\Sigma_{\nu(1)}\cdots\Sigma_{\nu(m)})=r^{\frac{1}{2}}(\Sigma_1\cdots\Sigma_{\frac{m}{2}}\Sigma_m\cdots\Sigma_{\frac{m}{2}+1})=r^{\frac{1}{2}}(\Sigma_{1}\cdots\Sigma_{\frac{m}{2}}\Sigma_{\frac{m}{2}}^{*}\cdots\Sigma_{1}^{*})$
$=\|\Psi_{\tau(1)}^*\Psi_{\tau(2)}\Psi_{\tau(3)}^*\Psi_{\tau(4)}\cdots\Psi_{\tau(m-1)}^*\Psi_{\tau(m)}\|$ by Lemma \ref{lema}.
\end{remark}
\begin{example}{\rm Let $m=4$, $\Psi_1=\Psi_2=\Psi_3=\Psi_4=\{T_0\}$, where $T_0=\left[
		\begin{matrix}
			0 & 0\\
			1 & 1\\
		\end{matrix}\right]$, $\Sigma_i$ families defined in Theorem \ref{ferkel} for $i=1, \ldots , 4$, $\alpha>0$ and let $\tau$ be identity permutation and $\nu$ permutation defined in Remark \ref{kbgd}. Then $\|\Psi_1^{(\alpha)}\circ\Psi_2^{(\alpha)}\circ\Psi_3^{(\alpha)}\circ\Psi_4^{(\alpha)}\|=\sqrt{2}$, and $r^{\frac{\alpha}{2}}(\Sigma_{\nu(1)}\Sigma_{\nu(2)}\Sigma_{\nu(3)}\Sigma_{\nu(4)})=\|\Psi_{1}^*\Psi_{2}\Psi_{3}^*\Psi_{4}\|^{\alpha}=4^{\alpha}$ which shows that \eqref{schwein} does not hold for $\alpha<\frac{1}{4}$.}
\end{example}
The following result and its proof is generalization of \cite[Theorem 3.8]{BP21} and its proof.
\begin{theorem}
Let $m\in\NN$ be even, $\alpha\ge\frac{2}{m}$, $\tau\in S_m$ and let $\Psi_1, \ldots , \Psi_m$ be bounded sets of nonnegative matrices that define operators on $l^2(R)$. Let $\Sigma_j$ for $j=1, \ldots , m$ be as in Theorem \ref{ferkel} and denote $\Theta_i=\Sigma_i\cdots\Sigma_{\frac{m}{2}}\Sigma_1\cdots\Sigma_{i-1}$ for $i=1, \ldots , \frac{m}{2}$. If $r\in\{\rho, \hat{\rho}\}$ then
$$\|\Psi_1^{(\alpha)}\circ\cdots\circ\Psi_m^{(\alpha)}\|\le r(\Sigma_1^{(\alpha)}\circ\cdots\circ\Sigma_m^{(\alpha)})^{\frac{1}{2}}\le r(\Sigma_1^{(\alpha)}\circ\cdots\circ\Sigma_{\frac{m}{2}}^{(\alpha)})$$
$$=r((\Psi_{\tau(1)}^*\Psi_{\tau(2)})^{(\alpha)}\circ(\Psi_{\tau(3)}^*\Psi_{\tau(4)})^{(\alpha)}\circ\cdots\circ(\Psi_{\tau(m-1)}^*\Psi_{\tau(m)})^{(\alpha)})$$
\be
\le r(\Theta_1^{(\alpha)}\circ\Theta_2^{(\alpha)}\circ\cdots\circ\Theta_{\frac{m}{2}}^{(\alpha)})^{\frac{2}{m}}\le
r(\Psi_{\tau(1)}^*\Psi_{\tau(2)}\Psi_{\tau(3)}^*\Psi_{\tau(4)}\cdots\Psi_{\tau(m-1)}^*\Psi_{\tau(m)})^{\alpha}
\label{taube}
\ee
\end{theorem}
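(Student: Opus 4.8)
The plan is to establish the four inequalities in \eqref{taube} from left to right, treating $r\in\{\rho,\hat\rho\}$ uniformly since Lemma \ref{lema}, Theorem \ref{endlich} and Theorem \ref{ferkel} are valid for both. Two links come for free. The leftmost inequality $\|\Psi_1^{(\alpha)}\circ\cdots\circ\Psi_m^{(\alpha)}\|\le r(\Sigma_1^{(\alpha)}\circ\cdots\circ\Sigma_m^{(\alpha)})^{1/2}$ is precisely the first inequality of \eqref{schwein} in Theorem \ref{ferkel}(ii), which applies here because $\alpha\ge\frac2m\ge\frac1m$; and the equality $r(\Sigma_1^{(\alpha)}\circ\cdots\circ\Sigma_{m/2}^{(\alpha)})=r((\Psi_{\tau(1)}^*\Psi_{\tau(2)})^{(\alpha)}\circ\cdots\circ(\Psi_{\tau(m-1)}^*\Psi_{\tau(m)})^{(\alpha)})$ is just the definition $\Sigma_j=\Psi_{\tau(2j-1)}^*\Psi_{\tau(2j)}$ written out.

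The crux is the ``folding'' inequality $r(\Sigma_1^{(\alpha)}\circ\cdots\circ\Sigma_m^{(\alpha)})^{1/2}\le r(\Sigma_1^{(\alpha)}\circ\cdots\circ\Sigma_{m/2}^{(\alpha)})$, and here is where $\alpha\ge\frac2m$ is forced. Put $E=\Sigma_1^{(\alpha)}\circ\cdots\circ\Sigma_{m/2}^{(\alpha)}$. Since $\frac m2\alpha\ge1$, Theorem \ref{thanfang}(ii) (applied element-wise, $k=1$) shows $E$ is a bounded set of nonnegative matrices defining operators on $l^2(R)$, so $r(E)$ is meaningful; this is the sole role of the hypothesis. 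Using $\Sigma_{\frac m2+j}=\Sigma_j^*$, commutativity of the Hadamard product at the level of sets, and $(a^{1/2}b^{1/2})^2=ab$, one checks the set identities
$$\Sigma_1^{(\alpha)}\circ\cdots\circ\Sigma_m^{(\alpha)}=E\circ E^*=\big(E^{(1/2)}\circ(E^*)^{(1/2)}\big)^{(2)},$$
where $S:=E^{(1/2)}\circ(E^*)^{(1/2)}$ is the two-term weighted Hadamard geometric mean of $E$ and $E^*$ and hence, being dominated by $\frac12E+\frac12E^*$, is again a bounded set of nonnegative matrices defining operators on $l^2(R)$. Now Theorem \ref{endlich}(ii) (the ``in particular'' clause, with $k=1$ and $t=2\ge1$) gives $r(S^{(2)})\le r(S)^2$, while Theorem \ref{endlich}(i) applied to $S=E^{(1/2)}\circ(E^*)^{(1/2)}$ (weights $\frac12,\frac12$) gives $r(S)\le r(E)^{1/2}r(E^*)^{1/2}=r(E)$, the last equality because $r(\Psi^*)=r(\Psi)$ for $r\in\{\rho,\hat\rho\}$ (a consequence of $r(\Psi\Sigma)=r(\Sigma\Psi)$ and invariance of spectra and norms under adjoints). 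Chaining these yields $r(\Sigma_1^{(\alpha)}\circ\cdots\circ\Sigma_m^{(\alpha)})=r(S^{(2)})\le r(E)^2$, the claimed inequality.

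For the last two inequalities I would reproduce the mechanism in the proof of Theorem \ref{ferkel}. Write $n=\frac m2$. By commutativity of the Hadamard product on sets, $(\Sigma_1^{(\alpha)}\circ\cdots\circ\Sigma_n^{(\alpha)})^{n}$ equals the ordered product over $i=1,\dots,n$ of the cyclically shifted sets $\Sigma_i^{(\alpha)}\circ\Sigma_{i+1}^{(\alpha)}\circ\cdots\circ\Sigma_{i-1}^{(\alpha)}$ (indices mod $n$). Applying Theorem \ref{endlich}(ii) to this product — $n$ rows, $n$ columns, all exponents $\alpha$, the column condition being once more $n\alpha\ge1$ — the $j$-th column product is $\Theta_j=\Sigma_j\Sigma_{j+1}\cdots\Sigma_{j-1}$, whence
$$r\big((\Sigma_1^{(\alpha)}\circ\cdots\circ\Sigma_n^{(\alpha)})^{n}\big)\le r(\Theta_1^{(\alpha)}\circ\cdots\circ\Theta_n^{(\alpha)})\le r(\Theta_1)^{\alpha}\cdots r(\Theta_n)^{\alpha}.$$
Taking $n$-th roots and using $r(X^n)=r(X)^n$ with $X=\Sigma_1^{(\alpha)}\circ\cdots\circ\Sigma_n^{(\alpha)}$ gives the third inequality; and since each $\Theta_i$ is a cyclic rotation of $\Sigma_1\cdots\Sigma_n$ we have $r(\Theta_i)=r(\Sigma_1\cdots\Sigma_n)$, while $\Sigma_1\cdots\Sigma_n=\Psi_{\tau(1)}^*\Psi_{\tau(2)}\cdots\Psi_{\tau(m-1)}^*\Psi_{\tau(m)}$, which gives the fourth. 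The only real difficulty is the bookkeeping: one must verify at every stage that the sets in play ($E$, $S$, the $\Theta_i$, and the various Hadamard powers) are bounded sets of nonnegative matrices defining operators on $l^2(R)$, so that Lemma \ref{lema} and Theorem \ref{endlich} genuinely apply — and it is exactly the demand that $E$ define an operator that upgrades the restriction from $\alpha\ge\frac1m$ (as in Theorem \ref{ferkel}) to $\alpha\ge\frac2m$.
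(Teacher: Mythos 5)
Your proof is correct and follows essentially the same route as the paper: the first inequality of \eqref{schwein}, the folding bound $r(E\circ E^*)\le r(E)r(E^*)=r(E)^2$ for $E=\Sigma_1^{(\alpha)}\circ\cdots\circ\Sigma_{m/2}^{(\alpha)}$, and the cyclic-shift factorization of $E^{m/2}$ handled by Theorem \ref{endlich}(ii) together with $r(\Theta_1)=\cdots=r(\Theta_{m/2})$. The only cosmetic difference is in the folding step, which the paper obtains in one stroke by applying Theorem \ref{endlich}(ii) to $E\circ E^*$ with weights $(1,1)$, rather than via your auxiliary set $S=E^{(1/2)}\circ(E^*)^{(1/2)}$ and the $t=2$ Hadamard power.
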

\begin{proof}
By the first inequality in \eqref{schwein} and using 
Theorem \ref{endlich}(ii) we have
$$\|\Psi_1^{(\alpha)}\circ\cdots\circ\Psi_m^{(\alpha)}\|\le r(\Sigma_1^{(\alpha)}\circ\cdots\circ\Sigma_m^{(\alpha)})^{\frac{1}{2}}$$
$$=r(\Sigma_1^{(\alpha)}\circ\cdots\circ\Sigma_{\frac{m}{2}}^{(\alpha)}\circ(\Sigma_1^*)^{(\alpha)}\circ\cdots\circ(\Sigma_{\frac{m}{2}}^*)^{(\alpha)})^{\frac{1}{2}}$$
$$\le (r(\Sigma_1^{(\alpha)}\circ\cdots\circ\Sigma_{\frac{m}{2}}^{(\alpha)})r((\Sigma_1^{(\alpha)}\circ\cdots\circ\Sigma_{\frac{m}{2}}^{(\alpha)})^*))^{\frac{1}{2}}=r(\Sigma_1^{(\alpha)}\circ\cdots\circ\Sigma_{\frac{m}{2}}^{(\alpha)})$$
$$=r((\Psi_{\tau(1)}^*\Psi_{\tau(2)})^{(\alpha)}\circ(\Psi_{\tau(3)}^*\Psi_{\tau(4)})^{(\alpha)}\circ\cdots\circ(\Psi_{\tau(m-1)}^*\Psi_{\tau(m)})^{(\alpha)})$$
Since
$$((\Psi_{\tau(1)}^*\Psi_{\tau(2)})^{(\alpha)}\circ(\Psi_{\tau(3)}^*\Psi_{\tau(4)})^{(\alpha)}\circ\cdots\circ(\Psi_{\tau(m-1)}^*\Psi_{\tau(m)})^{(\alpha)})^{\frac{m}{2}}=$$
$$((\Psi_{\tau(1)}^*\Psi_{\tau(2)})^{(\alpha)}
\circ\cdots\circ(\Psi_{\tau(m-1)}^*\Psi_{\tau(m)})^{(\alpha)})((\Psi_{\tau(3)}^*\Psi_{\tau(4)})^{(\alpha)}
\circ\cdots\circ(\Psi_{\tau(1)}^*\Psi_{\tau(2)})^{(\alpha)})$$
$$\cdots((\Psi_{\tau(m-1)}^*\Psi_{\tau(m)})^{(\alpha)}
\circ\cdots\circ(\Psi_{\tau(m-3)}^*\Psi_{\tau(m-2)})^{(\alpha)})$$
It follows by 
Theorem \ref{endlich}(ii)
$$r((\Psi_{\tau(1)}^*\Psi_{\tau(2)})^{(\alpha)}\circ(\Psi_{\tau(3)}^*\Psi_{\tau(4)})^{(\alpha)}\circ\cdots\circ(\Psi_{\tau(m-1)}^*\Psi_{\tau(m)})^{(\alpha)})\le$$
$$r(\Theta_1^{(\alpha)}\circ\Theta_2^{(\alpha)}\circ\cdots\circ\Theta_{\frac{m}{2}}^{(\alpha)})^{\frac{2}{m}}\le (r(\Theta_1)^{\alpha}\cdots r(\Theta_{\frac{m}{2}})^{\alpha})^{\frac{2}{m}}$$
$$=r(\Psi_{\tau(1)}^*\Psi_{\tau(2)}\Psi_{\tau(3)}^*\Psi_{\tau(4)}\cdots\Psi_{\tau(m-1)}^*\Psi_{\tau(m)})^{\alpha},$$
where the last equality follows from $r(\Theta_1)=\cdots =r(\Theta_{\frac{m}{2}}).$
\end{proof}
\begin{example}{\rm Let $m=4$, $\Psi_1=\Psi_4=\biggl\{\left[
		\begin{matrix}
			0 & 0\\
			1 & 1\\
		\end{matrix}\right]\biggr\}$, $\Psi_2=\biggl\{\left[
	    \begin{matrix}
	    	1 & 0\\
	    	1 & 1\\
	    \end{matrix}\right]\biggr\}$, $\Psi_3=\biggl\{\left[
        \begin{matrix}
        	0 & 1\\
        	1 & 1\\
        \end{matrix}\right]\biggr\}$, $\alpha>0$ and let $\tau$ be identity permutation. Then $\|\Psi_1^{(\alpha)}\circ\Psi_2^{(\alpha)}\circ\Psi_3^{(\alpha)}\circ\Psi_4^{(\alpha)}\|=\sqrt{2}$, while $r(\Psi_{\tau(1)}^*\Psi_{\tau(2)}\Psi_{\tau(3)}^*\Psi_{\tau(4)})^{\alpha}=3^{\alpha}$ which shows that \eqref{taube} is not valid for $\alpha<\frac{1}{2}\log_{3} 2$.}
\end{example}
The following result is generalization of \cite[Theorem 3.10]{BP21}.
\begin{theorem}
\label{stern}
	Let $\Psi_1, \ldots , \Psi_m$ be bounded sets of positive kernel operators on $L^2(X, \mu)$ and $\tau, \nu\in S_m$. Denote  $\Omega_j=\Psi_{\tau(j)}^*\Psi_{\nu(j)}\cdots\Psi_{\tau(m)}^*\Psi_{\nu(m)}
	\cdots\Psi_{\tau(j-1)}^*\Psi_{\nu(j-1)}$ for $j=1, \ldots , m$. Let $r\in\{\rho, \hat{\rho}\}$.

\noindent{\rm (i)} Then
$$\|\Psi_1^{(\frac{1}{m})}\circ\cdots\circ\Psi_m^{(\frac{1}{m})}\|\le r((\Psi_{\tau(1)}^*\Psi_{\nu(1)})^{(\frac{1}{m})}\circ\cdots\circ(\Psi_{\tau(m)}^*\Psi_{\nu(m)})^{(\frac{1}{m})})^{\frac{1}{2}}$$
\be
\le r((\Omega_1)^{(\frac{1}{m})}\circ\cdots\circ(\Omega_m)^{(\frac{1}{m})})^{\frac{1}{2m}}\le r(\Psi_{\tau(1)}^*\Psi_{\nu(1)}\cdots\Psi_{\tau(m)}^*\Psi_{\nu(m)})^{\frac{1}{2m}}
\label{esel}
\ee
\noindent{\rm (ii)} If $\Psi_1, \ldots , \Psi_m$ are bounded sets of nonnegative matrices that define operators on $l^2(R)$ and if $\alpha\ge\frac{1}{m}$, then
$$\|\Psi_1^{(\alpha)}\circ\cdots\circ\Psi_m^{(\alpha)}\|\le r((\Psi_{\tau(1)}^*\Psi_{\nu(1)})^{(\alpha)}\circ\cdots\circ(\Psi_{\tau(m)}^*\Psi_{\nu(m)})^{(\alpha)})^{\frac{1}{2}}$$
\be
\le r(\Omega_1^{(\alpha)}\circ\cdots\circ\Omega_m^{(\alpha)})^{\frac{1}{2m}}\le r(\Psi_{\tau(1)}^*\Psi_{\nu(1)}\cdots\Psi_{\tau(m)}^*\Psi_{\nu(m)})^{\frac{\alpha}{2}}
\label{affe}
\ee 
\end{theorem}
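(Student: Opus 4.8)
The strategy mirrors the proofs of Theorems \ref{first}, \ref{kety}, and \ref{ferkel}: reduce the norm of a Hadamard geometric mean on $L^2$ to the spectral radius of a product via Lemma \ref{lema}, then repeatedly apply Theorem \ref{endlich} to push the Hadamard product inside. For part (i), I would start from $\|\Psi_1^{(\frac1m)}\circ\cdots\circ\Psi_m^{(\frac1m)}\| = r\bigl((\Psi_1^{(\frac1m)}\circ\cdots\circ\Psi_m^{(\frac1m)})^*(\Psi_1^{(\frac1m)}\circ\cdots\circ\Psi_m^{(\frac1m)})\bigr)^{1/2}$, using Lemma \ref{lema}. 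Since $(\Psi_j^{(\frac1m)})^* = (\Psi_j^*)^{(\frac1m)}$ and the Hadamard product is commutative, I can rewrite the adjoint factor as $(\Psi_{\tau(1)}^*)^{(\frac1m)}\circ\cdots\circ(\Psi_{\tau(m)}^*)^{(\frac1m)}$ and the other factor as $\Psi_{\nu(1)}^{(\frac1m)}\circ\cdots\circ\Psi_{\nu(m)}^{(\frac1m)}$, for the given permutations $\tau,\nu$. Applying Theorem \ref{endlich}(i) with $k=2$ (so the two bracketed products are $(\Psi_{\tau(1)}^*)^{(\frac1m)}\circ\cdots$ and $\Psi_{\nu(1)}^{(\frac1m)}\circ\cdots$) yields $r\bigl(\cdots\bigr) \le r\bigl((\Psi_{\tau(1)}^*\Psi_{\nu(1)})^{(\frac1m)}\circ\cdots\circ(\Psi_{\tau(m)}^*\Psi_{\nu(m)})^{(\frac1m)}\bigr)$, which is the square of the first claimed bound.

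For the second and third inequalities, I would set $\Delta_j = \Psi_{\tau(j)}^*\Psi_{\nu(j)}$ and observe, exactly as in the proof of Theorem \ref{ferkel}, that $(\Delta_1^{(\frac1m)}\circ\cdots\circ\Delta_m^{(\frac1m)})^m$ equals a product of $m$ cyclically shifted Hadamard means $(\Delta_1^{(\frac1m)}\circ\cdots\circ\Delta_m^{(\frac1m)})(\Delta_2^{(\frac1m)}\circ\cdots\circ\Delta_1^{(\frac1m)})\cdots$. Using $r(\Sigma^m)=r(\Sigma)^m$ together with Theorem \ref{endlich}(i) applied to this product of $m$ cyclic shifts gives $r(\Delta_1^{(\frac1m)}\circ\cdots\circ\Delta_m^{(\frac1m)}) \le r(\Omega_1^{(\frac1m)}\circ\cdots\circ\Omega_m^{(\frac1m)})^{1/m} \le (r(\Omega_1)\cdots r(\Omega_m))^{1/m^2}$, where $\Omega_j = \Delta_j\Delta_{j+1}\cdots\Delta_{j-1} = \Psi_{\tau(j)}^*\Psi_{\nu(j)}\cdots\Psi_{\tau(j-1)}^*\Psi_{\nu(j-1)}$ as defined in the statement. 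Finally, since each $\Omega_j$ is a cyclic permutation of $\Omega_1$, the identity $r(\Psi\Sigma)=r(\Sigma\Psi)$ gives $r(\Omega_1)=\cdots=r(\Omega_m)=r(\Psi_{\tau(1)}^*\Psi_{\nu(1)}\cdots\Psi_{\tau(m)}^*\Psi_{\nu(m)})$, so $(r(\Omega_1)\cdots r(\Omega_m))^{1/m^2} = r(\Psi_{\tau(1)}^*\Psi_{\nu(1)}\cdots\Psi_{\tau(m)}^*\Psi_{\nu(m)})^{1/m}$; taking the square root of the whole chain (from Lemma \ref{lema} we had a square root out front) produces \eqref{esel}.

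Part (ii) is entirely parallel: replace the exponent $\frac1m$ by $\alpha\ge\frac1m$ everywhere and invoke Theorem \ref{endlich}(ii) instead of (i) at each step; the condition $\alpha\ge\frac1m$ (i.e. $m\alpha\ge 1$) is exactly what is needed so that the Hadamard powers $(\cdots)^{(\alpha)}$ define operators on $L\in\mathcal L$ and so that the inequalities in Theorem \ref{endlich}(ii) apply, and the final bound picks up the exponent $\alpha$ in place of $\frac1m$, giving \eqref{affe}. The only subtlety I anticipate is bookkeeping: making sure the cyclic-shift factorization of $(\Delta_1^{(\frac1m)}\circ\cdots\circ\Delta_m^{(\frac1m)})^m$ is written so that the $j$-th bracket, after pulling out Hadamard factors via Theorem \ref{endlich}, produces exactly the $\Omega_j$ in the stated form — i.e. that the indices line up with $\Omega_j=\Psi_{\tau(j)}^*\Psi_{\nu(j)}\cdots\Psi_{\tau(m)}^*\Psi_{\nu(m)}\cdots\Psi_{\tau(j-1)}^*\Psi_{\nu(j-1)}$ rather than some other rotation. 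This is routine but needs care; once it is set up, all three inequalities in each part follow immediately from Lemma \ref{lema}, commutativity of $\circ$, Theorem \ref{endlich}, and the cyclic invariance of $r$.
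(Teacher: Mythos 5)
Your proposal is correct and follows essentially the same route as the paper's proof: Lemma \ref{lema} plus commutativity of the Hadamard product to pass to $r\bigl(((\Psi_{\tau(1)}^*)^{(\frac{1}{m})}\circ\cdots)(\Psi_{\nu(1)}^{(\frac{1}{m})}\circ\cdots)\bigr)^{1/2}$, Theorem \ref{endlich}(i) with $k=2$ for the first inequality, the cyclic-shift factorization of $\bigl((\Psi_{\tau(1)}^*\Psi_{\nu(1)})^{(\frac{1}{m})}\circ\cdots\bigr)^m$ together with Theorem \ref{endlich}(i) and $r(\Omega_1)=\cdots=r(\Omega_m)$ for the remaining ones, and Theorem \ref{endlich}(ii) in place of (i) for part (ii). No gaps; the index bookkeeping you flag is exactly the only point of care in the paper's argument as well.
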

\begin{proof}
We prove \eqref{esel}. By Lemma \ref{lema}, by commutativity of Hadamard product and by 
Theorem \ref{endlich}(i) we have
$$\|\Psi_1^{(\frac{1}{m})}\circ\cdots\circ\Psi_m^{(\frac{1}{m})}\|=r((\Psi_1^{(\frac{1}{m})}\circ\cdots\circ\Psi_m^{(\frac{1}{m})})^*(\Psi_1^{(\frac{1}{m})}\circ\cdots\circ\Psi_m^{(\frac{1}{m})}))^{\frac{1}{2}}=$$
$$r(((\Psi_{\tau(1)}^*)^{(\frac{1}{m})}\circ\cdots\circ(\Psi_{\tau(m)}^*)^{(\frac{1}{m})})(\Psi_{\nu(1)}^{(\frac{1}{m})}\circ\cdots\circ\Psi_{\nu(m)}^{(\frac{1}{m})}))^{\frac{1}{2}}$$
$$\le r((\Psi_{\tau(1)}^*\Psi_{\nu(1)})^{(\frac{1}{m})}\circ\cdots\circ(\Psi_{\tau(m)}^*\Psi_{\nu(m)})^{(\frac{1}{m})})^{\frac{1}{2}}$$
Observe that
$$((\Psi_{\tau(1)}^*\Psi_{\nu(1)})^{(\frac{1}{m})}\circ\cdots\circ(\Psi_{\tau(m)}^*\Psi_{\nu(m)})^{(\frac{1}{m})})^m=((\Psi_{\tau(1)}^*\Psi_{\nu(1)})^{(\frac{1}{m})}\circ$$
$$\cdots\circ(\Psi_{\tau(m)}^*\Psi_{\nu(m)})^{(\frac{1}{m})})\cdot$$
$$((\Psi_{\tau(2)}^*\Psi_{\nu(2)})^{(\frac{1}{m})}\circ\cdots\circ(\Psi_{\tau(1)}^*\Psi_{\nu(1)})^{(\frac{1}{m})})\cdots((\Psi_{\tau(m)}^*\Psi_{\nu(m)})^{(\frac{1}{m})}\circ$$
$$\cdots\circ(\Psi_{\tau(m-1)}^*\Psi_{\nu(m-1)})^{(\frac{1}{m})}).$$
It follows by 
Theorem \ref{endlich}(i) that
$$r((\Psi_{\tau(1)}^*\Psi_{\nu(1)})^{(\frac{1}{m})}\circ\cdots\circ(\Psi_{\tau(m)}^*\Psi_{\nu(m)})^{(\frac{1}{m})})^{\frac{1}{2}}\le r(\Omega_1^{(\frac{1}{m})}\circ\cdots\circ\Omega_m^{(\frac{1}{m})})^{\frac{1}{2m}}$$
$$\le (r(\Omega_1)\cdots r(\Omega_m))^{\frac{1}{2m^2}}=r(\Psi_{\tau(1)}^*\Psi_{\nu(1)}\cdots\Psi_{\tau(m)}^*\Psi_{\nu(m)})^{\frac{1}{2m}}.$$
The last equality follows from $r(\Omega_1)= \ldots = r(\Omega_m)$ which completes the proof of \eqref{esel}. The proof of \eqref{affe} follows similarly by applying 
Theorem \ref{endlich}(ii). 
\end{proof}
\begin{example}{\rm Let $m=4$, $\Psi_1=\Psi_4=\biggl\{\left[
		\begin{matrix}
			0 & 0\\
			1 & 1\\
		\end{matrix}\right]\biggr\}$, $\Psi_2=\biggl\{\left[
		\begin{matrix}
			1 & 0\\
			1 & 1\\
		\end{matrix}\right]\biggr\}$, $\Psi_3=\biggl\{\left[
		\begin{matrix}
			0 & 1\\
			1 & 1\\
		\end{matrix}\right]\biggr\}$, $\alpha>0$, $\tau=\bigl(\begin{smallmatrix}
	                                           1 & 2 & 3 & 4\\
                                               4 & 3 & 2 & 1
                                               \end{smallmatrix}\bigr)$, $\nu=\bigl(\begin{smallmatrix}
                                               1 & 2 & 3 & 4\\
                                               2 & 1 & 4 & 3
                                           \end{smallmatrix}\bigr)$. It follows that $\|\Psi_1^{(\alpha)}\circ\Psi_2^{(\alpha)}\circ\Psi_3^{(\alpha)}\circ\Psi_4^{(\alpha)}\|=\sqrt{2}$, while $r(\Psi_{\tau(1)}^*\Psi_{\nu(1)}\Psi_{\tau(2)}^*\Psi_{\nu(2)}\Psi_{\tau(3)}^*\Psi_{\nu(3)}\Psi_{\tau(4)}^*\Psi_{\nu(4)})^{\frac{\alpha}{2}}=4^{\alpha}$, which shows that \eqref{affe} does not hold for values of $\alpha<\frac{1}{4}$. }
\end{example}
The following result is consequence of Theorem \ref{stern} and gives different refinement of the inequalities \eqref{laufen} and \eqref{urlaub} as in Theorem \ref{kety} and Theorem \ref{kate} and generalizes \cite[Corollary 5.11]{BP21}.
\begin{corollary}
Let m be odd and let $\Psi_1, \ldots , \Psi_m$ be bounded sets of positive kernel operators on $L^2(X, \mu)$ and let $\Omega_j$ for $j=1, \ldots, m$ be as in Theorem \ref{stern} and let $r\in\{\rho, \hat{\rho}\}$.

\noindent{\rm (i)} Then
$$\|\Psi_1^{(\frac{1}{m})}\circ\cdots\circ\Psi_m^{(\frac{1}{m})}\|$$
$$\le r((\Psi_1^*\Psi_2)^{(\frac{1}{m})}\circ\cdots\circ(\Psi_{m-2}^*\Psi_{m-1})^{(\frac{1}{m})}\circ(\Psi_m^*\Psi_1)^{(\frac{1}{m})}\circ(\Psi_2^*\Psi_3)^{(\frac{1}{m})}\circ\cdots\circ$$
$$(\Psi_{m-1}^*\Psi_{m})^{(\frac{1}{m})})^{\frac{1}{2}}\le r(\Omega_1^{(\frac{1}{m})}\circ\cdots\circ\Omega_m^{(\frac{1}{m})})^{\frac{1}{2m}}$$
$$\le r(\Psi_1^*\Psi_2\cdots\Psi_{m-2}^*\Psi_{m-1}\Psi_m^*\Psi_1\Psi_2^*\Psi_3\cdots\Psi_{m-1}^*\Psi_{m})^{\frac{1}{2m}}$$
\be
\nonumber
=r(\Psi_1\Psi_2^*\Psi_3\cdots\Psi_{m-1}^*\Psi_{m}\Psi_1^*\Psi_2\cdots\Psi_{m-2}^*\Psi_{m-1}\Psi_m^*)^{\frac{1}{2m}}
\label{sonne}
\ee

\noindent{\rm (ii)} If $\Psi_1, \ldots , \Psi_m$ are nonnegative matrices that define operators on $l^2(R)$ and if $\alpha\ge\frac{1}{m}$ then
$$\|\Psi_1^{(\alpha)}\circ\cdots\circ\Psi_m^{(\alpha)}\|$$
$$\le r((\Psi_1^*\Psi_2)^{(\alpha)}\circ\cdots\circ(\Psi_{m-2}^*\Psi_{m-1})^{(\alpha)}\circ(\Psi_m^*\Psi_1)^{(\alpha)}\circ(\Psi_2^*\Psi_3)^{(\alpha)}\circ$$
$$\cdots\circ(\Psi_{m-1}^*\Psi_{m})^{(\alpha)})^{\frac{1}{2}} \le r(\Omega_1^{(\alpha)}\circ\cdots\circ\Omega_m^{(\alpha)})^{\frac{1}{2m}}$$
$$\le r(\Psi_1^*\Psi_2\cdots\Psi_{m-2}^*\Psi_{m-1}\Psi_m^*\Psi_1\Psi_2^*\Psi_3\cdots\Psi_{m-1}^*\Psi_{m})^{\frac{\alpha}{2}}$$
\be
\nonumber
=r(\Psi_1\Psi_2^*\Psi_3\cdots\Psi_{m-1}^*\Psi_{m}\Psi_1^*\Psi_2\cdots\Psi_{m-2}^*\Psi_{m-1}\Psi_m^*)^{\frac{\alpha}{2}}
\label{wind}
\ee 
\end{corollary}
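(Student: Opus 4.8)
The plan is to deduce both parts of the corollary from Theorem \ref{stern} by choosing the permutations $\tau,\nu\in S_m$ appropriately and then recognizing the resulting products as those displayed in the statement. First I would set, for $m$ odd, $\tau$ and $\nu$ so that the pairs $\Psi_{\tau(j)}^*\Psi_{\nu(j)}$ run precisely through
$\Psi_1^*\Psi_2,\ \Psi_3^*\Psi_4,\ \ldots,\ \Psi_{m-2}^*\Psi_{m-1},\ \Psi_m^*\Psi_1,\ \Psi_2^*\Psi_3,\ \ldots,\ \Psi_{m-1}^*\Psi_m$ in this cyclic order; concretely, $\tau$ lists the odd indices $1,3,\ldots,m$ followed by the even indices $2,4,\ldots,m-1$, and $\nu$ is the corresponding shift ($\nu(j)=\tau(j)+1$, read cyclically modulo $m$). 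Since $\{\tau(1),\ldots,\tau(m)\}=\{1,\ldots,m\}$ and likewise for $\nu$, this is a legitimate choice of permutations, and the middle term of \eqref{esel} becomes exactly the Hadamard weighted geometric mean appearing on the second line of part (i). Then the $\Omega_j$ of Theorem \ref{stern}, namely $\Omega_j=\Psi_{\tau(j)}^*\Psi_{\nu(j)}\cdots\Psi_{\tau(m)}^*\Psi_{\nu(m)}\cdots\Psi_{\tau(j-1)}^*\Psi_{\nu(j-1)}$, specialize to cyclic rearrangements of the long word $\Psi_1^*\Psi_2\cdots\Psi_{m-2}^*\Psi_{m-1}\Psi_m^*\Psi_1\Psi_2^*\Psi_3\cdots\Psi_{m-1}^*\Psi_m$, so all $r(\Omega_j)$ coincide and the chain \eqref{esel} yields the first three displayed inequalities.

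Next I would handle the final equality in part (i), which rewrites $r(\Psi_1^*\Psi_2\cdots\Psi_{m-1}^*\Psi_m)$ as $r(\Psi_1\Psi_2^*\Psi_3\cdots\Psi_{m-2}^*\Psi_{m-1}\Psi_m^*)$. For $r\in\{\rho,\hat\rho\}$ this follows from the cyclic-invariance property $r(\Psi\Sigma)=r(\Sigma\Psi)$ stated in the preliminaries (used to rotate the word), together with the identity $\rho(B)=\rho(B^*)$ and $\hat\rho(\Sigma)=\hat\rho(\Sigma^*)$ on $L^2(X,\mu)$, applied to the adjoint of the rotated word; one checks that $(\Psi_1^*\Psi_2\cdots\Psi_{m-1}^*\Psi_m)^*$ is, after a cyclic rotation, the word $\Psi_1\Psi_2^*\Psi_3\cdots\Psi_{m-2}^*\Psi_{m-1}\Psi_m^*$. (This is the step where the oddness of $m$ matters, since it controls which factors get starred after taking adjoints and rotating.) This establishes part (i) completely.

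For part (ii) the argument is identical except that Theorem \ref{stern}(ii) is invoked in place of Theorem \ref{stern}(i): with $\alpha\ge\frac1m$ and $\Psi_1,\ldots,\Psi_m$ nonnegative matrices defining operators on $l^2(R)$, the same choice of $\tau,\nu$ turns \eqref{affe} into the displayed chain of inequalities for part (ii), and the concluding equality again follows from cyclic invariance of $r$ together with $r(\Sigma)=r(\Sigma^*)$. I expect the only real work to be bookkeeping: verifying that the chosen $\tau$ and $\nu$ reproduce \emph{verbatim} the pairs and the long words written in the statement, and checking the adjoint-plus-rotation identity for the last equality. There is no genuine analytic obstacle here — everything reduces to Theorem \ref{stern} plus elementary permutation combinatorics and the basic properties of $\rho,\hat\rho$ recalled in Section 1 — so the proof can be written quite compactly, essentially as "apply Theorem \ref{stern} with these $\tau,\nu$ and simplify."
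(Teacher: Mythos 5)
Your proposal is correct and takes essentially the same route as the paper: the paper's proof is exactly "apply Theorem \ref{stern} with $\tau=(1,3,\ldots,m,2,4,\ldots,m-1)$ and $\nu(j)\equiv\tau(j)+1 \pmod m$," which is the choice you describe, and the displayed chain is then \eqref{esel} (resp.\ \eqref{affe}) verbatim. One small correction to your treatment of the concluding equality: no adjoint identity is needed, and the check you propose would actually fail, since taking adjoints reverses the order of the factors --- for $m=3$, $(\Psi_1^*\Psi_2\Psi_3^*\Psi_1\Psi_2^*\Psi_3)^*=\Psi_3^*\Psi_2\Psi_1^*\Psi_3\Psi_2^*\Psi_1$, which is not a cyclic rotation of $\Psi_1\Psi_2^*\Psi_3\Psi_1^*\Psi_2\Psi_3^*$; the equality is simply $r(AB)=r(BA)$ with $A=\Psi_1^*\Psi_2\cdots\Psi_{m-2}^*\Psi_{m-1}\Psi_m^*$ and $B=\Psi_1\Psi_2^*\Psi_3\cdots\Psi_{m-1}^*\Psi_m$, i.e.\ the cyclic invariance you already cite, so the harmless adjoint detour should just be dropped.
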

\begin{proof}
It follows by taking the permutations $\tau(j)=2j-1$ for $1\le j \le\frac{m+1}{2}$; $\tau(j)=2(j-\frac{m+1}{2})$ for $\frac{m+3}{2}\le j \le m$ and $\nu(j)=2j$ for $1\le j \le\frac{m-1}{2}$; $\nu(j)=2(j-\frac{m-1}{2})-1$ for $\frac{m+1}{2}\le j \le m$ in Theorem \ref{stern}.
\end{proof}
We conclude this article with the generalization of \cite[Lemma 3.13]{BP21}. 

\begin{lemma}
\label{dog}
Let $\alpha\ge\frac{1}{2}$ and let $\Psi$ be bounded set of nonnegative matrices that define operators on $l^2(R)$ and $r\in\{\rho, \hat{\rho}\}$. Then
\be
r(\Psi^{(\alpha)}\circ(\Psi^*)^{(\alpha)})\le r(\Psi^{(\alpha)}\circ\Psi^{(\alpha)})\le r(\Psi)^{2\alpha}
\label{hund}
\ee
\end{lemma}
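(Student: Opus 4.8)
The plan is to establish the two inequalities in \eqref{hund} separately. The second one, $r(\Psi^{(\alpha)}\circ\Psi^{(\alpha)})\le r(\Psi)^{2\alpha}$, should be immediate from Theorem \ref{endlich}(ii): since $2\alpha\ge 1$, applying it with $k=1$, $m=2$, $\alpha_1=\alpha_2=\alpha$ and $\Psi_{1 1}=\Psi_{1 2}=\Psi$ makes the chain \eqref{nice} collapse to $r(\Psi^{(\alpha)}\circ\Psi^{(\alpha)})\le r(\Psi)^{\alpha}r(\Psi)^{\alpha}=r(\Psi)^{2\alpha}$. (The same hypothesis $2\alpha\ge 1$, via Theorem \ref{thanfang}(ii), also guarantees that $\Psi^{(\alpha)}\circ\Psi^{(\alpha)}$ and $\Psi^{(\alpha)}\circ(\Psi^{*})^{(\alpha)}$ are bounded sets of positive operators on $l^2(R)$, so that all the quantities in \eqref{hund} make sense.)

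For the first inequality I would use the entrywise identity, valid for any nonnegative matrices $A,B$,
\[
A^{(\alpha)}\circ(B^{*})^{(\alpha)}=\bigl(A^{(2\alpha)}\bigr)^{(1/2)}\circ\bigl((B^{(2\alpha)})^{*}\bigr)^{(1/2)},
\]
together with the trivial facts $A^{(2\alpha)}=A^{(\alpha)}\circ A^{(\alpha)}$ and $B^{(2\alpha)}=B^{(\alpha)}\circ B^{(\alpha)}$. Writing $\Xi:=\Psi^{(\alpha)}\circ\Psi^{(\alpha)}$ (a bounded set of positive kernel operators on $l^2(R)$, and likewise $\Xi^{*}$), these relations say precisely that every element $A^{(\alpha)}\circ(B^{*})^{(\alpha)}$ of $\Psi^{(\alpha)}\circ(\Psi^{*})^{(\alpha)}$ lies in $\Xi^{(1/2)}\circ(\Xi^{*})^{(1/2)}$, i.e. $\Psi^{(\alpha)}\circ(\Psi^{*})^{(\alpha)}\subseteq\Xi^{(1/2)}\circ(\Xi^{*})^{(1/2)}$. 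Since $r$ is monotone under set inclusion, and since Theorem \ref{endlich}(i) applied with $k=1$, $m=2$, $\alpha_1=\alpha_2=\tfrac12$, $\Psi_{1 1}=\Xi$, $\Psi_{1 2}=\Xi^{*}$ gives $r(\Xi^{(1/2)}\circ(\Xi^{*})^{(1/2)})\le r(\Xi)^{1/2}r(\Xi^{*})^{1/2}$, it remains only to note that $r(\Xi^{*})=r(\Xi)$: indeed $(\Xi^{*})^{m}=(\Xi^{m})^{*}$ for every $m$, and on the Hilbert space $l^2(R)$ one has $\rho(M^{*})=\rho(M)$ and $\|M^{*}\|=\|M\|$ for every bounded operator $M$. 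Combining, $r(\Psi^{(\alpha)}\circ(\Psi^{*})^{(\alpha)})\le r(\Xi)=r(\Psi^{(\alpha)}\circ\Psi^{(\alpha)})$, which is the first inequality of \eqref{hund}.

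I expect the main obstacle to be finding this reduction for the first inequality. A naive entrywise comparison of $\Psi^{(\alpha)}\circ(\Psi^{*})^{(\alpha)}$ with $\Psi^{(\alpha)}\circ\Psi^{(\alpha)}$ fails because of the transpose hidden in $\Psi^{*}$, and feeding the weighted geometric mean of $\Psi$ and $\Psi^{*}$ directly into Theorem \ref{endlich}(ii) only reproduces the coarser bound $r(\Psi)^{2\alpha}$ rather than the intermediate term $r(\Psi^{(\alpha)}\circ\Psi^{(\alpha)})$. The inclusion into $\Xi^{(1/2)}\circ(\Xi^{*})^{(1/2)}$ is the device that bridges the two; beyond it, the only routine point to check is that $\Xi$ and $\Xi^{*}$ are genuine bounded sets of positive operators on $l^2(R)$, so that Theorem \ref{endlich} applies to them.
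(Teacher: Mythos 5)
Your proof is correct and follows essentially the same route as the paper: the entrywise identity $A^{(\alpha)}\circ(B^*)^{(\alpha)}=(A^{(\alpha)}\circ A^{(\alpha)})^{(1/2)}\circ((B^*)^{(\alpha)}\circ(B^*)^{(\alpha)})^{(1/2)}$ giving the inclusion into $\Xi^{(1/2)}\circ(\Xi^*)^{(1/2)}$, then Theorem \ref{endlich}(i) with weights $\tfrac12,\tfrac12$ and $r(\Xi^*)=r(\Xi)$, and finally the case $\sum\alpha_j=2\alpha\ge 1$ of Theorem \ref{endlich}(ii) for the last bound (the paper cites the corresponding result of \cite{BP21} instead, which is the same estimate). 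Your extra remarks on boundedness and monotonicity under inclusion only make explicit what the paper leaves implicit.
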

\begin{proof}
To prove the first inequality in \eqref{hund} observe that
 $\Psi^{(\alpha)}\circ(\Psi^*)^{(\alpha)}\subset (\Psi^{(\alpha)}\circ\Psi^{(\alpha)})^{(\frac{1}{2})}\circ((\Psi^*)^{(\alpha)}\circ(\Psi^*)^{(\alpha)})^{(\frac{1}{2})}$ since $A^{(\alpha)}\circ(B^*)^{(\alpha)}=(A^{(\alpha)}\circ A^{(\alpha)})^{(\frac{1}{2})}\circ ((B^*)^{(\alpha)}\circ(B^*)^{(\alpha)})^{(\frac{1}{2})}$ for all $A, B\in\Psi$. It follows that $r(\Psi^{(\alpha)}\circ(\Psi^*)^{(\alpha)})\le r((\Psi^{(\alpha)}\circ\Psi^{(\alpha)})^{(\frac{1}{2})}\circ((\Psi^*)^{(\alpha)}\circ(\Psi^*)^{(\alpha)})^{(\frac{1}{2})})\le r(\Psi^{(\alpha)}\circ\Psi^{(\alpha)})^{\frac{1}{2}}r((\Psi^*)^{(\alpha)}\circ(\Psi^*)^{(\alpha)})^{\frac{1}{2}}=r(\Psi^{(\alpha)}\circ\Psi^{(\alpha)})\le r(\Psi)^{2\alpha}$ by \cite[Theorem 3.3(ii)]{BP21}. 
\end{proof}
The following result generalizes \cite[Corollary 3.14]{BP21}. It follows from Theorem \ref{stern} and Lemma \ref{dog}.
\begin{corollary}
Let $\alpha\ge\frac{1}{2}$ and let $\Psi$ and $\Sigma$ be bounded sets of nonnegative matrices that define operators on $l^2(R)$. If $r\in\{\rho, \hat{\rho}\}$ then
$$\|\Psi^{(\alpha)}\circ\Sigma^{(\alpha)}\|\le r((\Psi^*\Sigma)^{(\alpha)}\circ(\Sigma^*\Psi)^{(\alpha)})^{\frac{1}{2}}$$
\be
\nonumber
\le r((\Psi^*\Sigma)^{(\alpha)}\circ(\Psi^*\Sigma)^{(\alpha)})^{\frac{1}{2}}\le r(\Psi^*\Sigma)^{\alpha}
\label{matrix}
\ee
\end{corollary}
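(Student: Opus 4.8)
The plan is to read off the first inequality from Theorem~\ref{stern}(ii) specialised to $m=2$, and to obtain the remaining two inequalities by applying Lemma~\ref{dog} to the set $\Psi^*\Sigma$.

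First I would apply Theorem~\ref{stern}(ii) with $m=2$, $\Psi_1=\Psi$, $\Psi_2=\Sigma$, taking $\tau$ to be the identity permutation of $\{1,2\}$ and $\nu$ the transposition $\nu=\bigl(\begin{smallmatrix}1&2\\2&1\end{smallmatrix}\bigr)$. Since $\alpha\ge\frac12=\frac1m$, the theorem applies, and because $\Psi_{\tau(1)}^*\Psi_{\nu(1)}=\Psi^*\Sigma$ and $\Psi_{\tau(2)}^*\Psi_{\nu(2)}=\Sigma^*\Psi$, the first inequality of that theorem reads exactly $\|\Psi^{(\alpha)}\circ\Sigma^{(\alpha)}\|\le r\bigl((\Psi^*\Sigma)^{(\alpha)}\circ(\Sigma^*\Psi)^{(\alpha)}\bigr)^{\frac12}$, which is the first inequality of the corollary. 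One could also derive this directly from Lemma~\ref{lema}, the identity $(\Psi^{(\alpha)}\circ\Sigma^{(\alpha)})^*=(\Psi^*)^{(\alpha)}\circ(\Sigma^*)^{(\alpha)}$, commutativity of the Hadamard product, and Theorem~\ref{endlich}(ii).

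Next I would put $\Xi:=\Psi^*\Sigma$ and note that $\Xi$ is again a bounded set of nonnegative matrices that define operators on $l^2(R)$: on $l^2(R)$ the adjoint of such a matrix is its transpose, which is nonnegative and bounded of the same norm, so each $B^*C$ with $B\in\Psi$, $C\in\Sigma$ is admissible, and $\sup_{B\in\Psi,\,C\in\Sigma}\|B^*C\|\le\|\Psi\|\,\|\Sigma\|<\infty$. Moreover $\Xi^*=(\Psi^*\Sigma)^*=\Sigma^*\Psi$, so that $(\Psi^*\Sigma)^{(\alpha)}\circ(\Sigma^*\Psi)^{(\alpha)}=\Xi^{(\alpha)}\circ(\Xi^*)^{(\alpha)}$. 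Applying Lemma~\ref{dog} to $\Xi$, which is legitimate since $\alpha\ge\frac12$, gives $r\bigl(\Xi^{(\alpha)}\circ(\Xi^*)^{(\alpha)}\bigr)\le r\bigl(\Xi^{(\alpha)}\circ\Xi^{(\alpha)}\bigr)\le r(\Xi)^{2\alpha}$; taking square roots and recalling that $r(\Xi)=r(\Psi^*\Sigma)$ yields the second and third inequalities.

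There is no genuine obstacle here. The only points requiring care are the choice of the permutations $\tau,\nu$, which must be arranged so that the two factors come out as $\Psi^*\Sigma$ and $\Sigma^*\Psi$ rather than, say, $\Psi^*\Psi$ and $\Sigma^*\Sigma$, and the verification that $\Psi^*\Sigma$ satisfies the hypotheses of Lemma~\ref{dog} with $(\Psi^*\Sigma)^*=\Sigma^*\Psi$, so that the expression $\Psi^{(\alpha)}\circ(\Psi^*)^{(\alpha)}$ appearing there corresponds to $(\Psi^*\Sigma)^{(\alpha)}\circ(\Sigma^*\Psi)^{(\alpha)}$ here.
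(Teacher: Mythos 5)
Your proposal is correct and follows essentially the same route as the paper, which derives the corollary precisely from Theorem \ref{stern} (here instantiated with $m=2$, $\tau$ the identity and $\nu$ the transposition, giving the factors $\Psi^*\Sigma$ and $\Sigma^*\Psi$) together with Lemma \ref{dog} applied to the set $\Psi^*\Sigma$, whose adjoint set is $\Sigma^*\Psi$. Your additional checks (that $\Psi^*\Sigma$ is again a bounded set of nonnegative matrices defining operators on $l^2(R)$ and that $\alpha\ge\frac12=\frac1m$ meets both hypotheses) are exactly the points needed to make the paper's one-line argument rigorous.
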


\noindent {\bf Acknowledgements.}
I would like to express my thanks to professor Aljo\v{s}a Peperko for suggesting the content and form of this section, as well as notations used there and throughout the paper. I also thank him for his support during the preparation of this paper and encouragement during the work.  The author thanks the colleagues and the staff at the Faculty of Mechanical Engineering and Institute of Mathematics, Physics and Mechanical Engineering in Ljubljana for their hospitality during the research stay in Slovenia.

This work was partial supported by grant KA103 of Erasmus+ European Mobility program, action 18232 of COST Short Term Scientific Mission program and by grants P1-0222 ana P1-0288 of the Slovenian Research Agency.

\bibliographystyle{amsplain}

\end{document}